\numberwithin{equation}{section}
\numberwithin{figure}{section}
\DeclareMathOperator{\curl}{curl}
\DeclareMathOperator{\Div}{div}
\DeclareMathOperator{\Spec}{Spec}
\newtheorem{theorem}{Theorem}[section]
\newtheorem{lemma}[theorem]{Lemma}
\newtheorem{proposition}[theorem]{Proposition}
\newtheorem{remark}[theorem]{Remark}
\newtheorem{corollary}[theorem]{Corollary}
\let\epsilon\varepsilon
\title{Superconductivity between $H_{C_2}$ and $H_{C_3}$}
\author{S. Fournais}
\author{B. Helffer}
\author{M. Persson}
\address[S. Fournais and M. Persson]{Department of Mathematical Sciences\\
University of Aarhus \\Ny Munkegade 118\\ 8000 Aarhus C \\ Denmark
}
\email{fournais@imf.au.dk, mickep@imf.au.dk}
\address[B. Helffer]{CNRS and Laboratoire de
Math\'{e}matiques UMR CNRS 8628\\ Universit\'{e} Paris-Sud - B\^{a}t 425\\
F-91405 Orsay Cedex\\ France.}
\email{bernard.helffer@math.u-psud.fr}
\date{\today}
\begin{document}

\begin{abstract}
Superconductivity for Type II superconductors in external magnetic
fields of magnitude between the second and third critical fields is
known to be restricted to a narrow boundary region. The profile of the
superconduc\-ting order parameter in the Ginzburg-Landau model is
expected to be governed by an effective one-dimensional model. This is
known to be the case for external magnetic fields sufficiently close
to the third critical field. In this text we prove such a result on a
larger interval of validity.
\end{abstract} 

\maketitle

\section{Introduction}

\subsection{Background}
When studying superconductivity in the Ginzburg-Landau mo\-del in strong
magnetic fields, one encounters three critical values of the magnetic
field strength. The first critical field is where a vortex appears and
will not concern us in the present text. At the second critical field,
denoted $H_{C_2}$,
superconductivity becomes essentially restricted to the boundary and
is weak in the interior. At the third critical field, $H_{C_3}$,
superconductivity disappears altogether.
In this paper we will discuss superconductivity in the zone between
$H_{C_2}$ and $H_{C_3}$.

The Ginzburg-Landau model of superconductivity is the following
functional,
\begin{align}\label{eq-hc2-GL}
\mathcal E[\psi,\mathbf{A}]&=\int_\Omega |(\nabla-i\kappa
H\mathbf{A})\psi|^2-\kappa^2|\psi|^2+\frac{\kappa^2}2|\psi|^4+(\kappa
H)^2|{\rm curl}(\mathbf{A}-\mathbf{F})|^2\,dx\,.
\end{align}
Here $\psi \in W^{1,2}(\Omega)$ is a complex valued wave function, 
$\mathbf{A} \in W^{1,2}(\Omega,{\mathbb R}^2)$ a
vector potential, $\kappa$ the Ginzburg-Landau parameter (a material
parameter), and  $H$ is the
strength of the applied magnetic field. The potential
$\mathbf{F}:\Omega\to{\mathbb R}^2$ is the unique vector field satisfying,
\begin{equation}\label{eq-hc2-F}
\curl\mathbf{F}=1\,,\quad\Div\mathbf{F}=0\quad\text{in}~\Omega\,,
\qquad\qquad
N\cdot \mathbf{F}=0\quad\text{on}~\partial\Omega\,,
\end{equation}
where $N$ is the unit inward normal vector of $\partial\Omega$.

With this notation, the critical fields behave as follows for large
$\kappa$:
\begin{align}
  \label{eq:24}
  H_{C_2} \approx \kappa + o(\kappa),\qquad H_{C_3} \approx
  \frac{\kappa}{\Theta_0} + o(\kappa),
\end{align}
where $\Theta_0 \approx 0.59$ is a universal constant. The definition of 
$\Theta_0$ is recalled in~\eqref{eqTheta_0} below.

Therefore, when we study the Ginzburg-Landau functional for $H = b \kappa$, 
$1<b<\Theta_0^{-1}$, superconductivity should be a boundary
phenomenon. This was proved in a weak sense in~\cite{pan2}.

\begin{theorem}[\cite{pan2}]\label{thm:Pan}
For any $b\in \left]1, \Theta_0^{-1}\right[$, there exists a constant $E_b$, 
such that, for $H = \kappa b$, 
\begin{align}
  \label{eq:25}
  \inf_{(\psi,{\bf A}) \in W^{1,2}(\Omega) \times
    W^{1,2}(\Omega;{\mathbb R}^2)}
  {\mathcal E}_{\kappa, H} [\psi, {\bf A}] = - \sqrt{\kappa H} E_b
  |\partial \Omega| + o(\kappa),\qquad \text{ as } \kappa \rightarrow \infty.
\end{align}
\end{theorem}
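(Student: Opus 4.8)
The plan is to establish separately the two matching estimates
\[
  \inf\mathcal E_{\kappa,H}\ \le\ -\sqrt{\kappa H}\,E_b\,|\partial\Omega|+o(\kappa)
  \qquad\text{and}\qquad
  \inf\mathcal E_{\kappa,H}\ \ge\ -\sqrt{\kappa H}\,E_b\,|\partial\Omega|+o(\kappa),
\]
where $E_b:=-\mathfrak m(b)>0$ and $\mathfrak m(b)$ is the infimum, over $\xi\in{\mathbb R}$ and $\phi\in C^\infty_c([0,\infty))$, of the effective one--dimensional functional
\[
  \mathcal J_\xi[\phi]\ =\ \int_0^\infty\Big(|\phi'(t)|^2+(t-\xi)^2|\phi(t)|^2-\tfrac1b|\phi(t)|^2+\tfrac1{2b}|\phi(t)|^4\Big)\,dt .
\]
With $\Theta_0=\inf_{\xi\in{\mathbb R}}\inf_{\|\phi\|_2=1}\int_0^\infty(|\phi'|^2+(t-\xi)^2|\phi|^2)\,dt$ and $\Theta_0<1/b<1$ whenever $b\in\left]1,\Theta_0^{-1}\right[$, testing $\mathcal J_{\xi_0}$ on $\delta u_0$ for small $\delta>0$, where $(\xi_0,u_0)$ realizes $\Theta_0$, shows $\mathfrak m(b)<0$; a compactness argument produces a minimizing pair. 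Since $\sqrt{\kappa H}=\sqrt b\,\kappa$, the errors $o(\kappa)$ and $o(\sqrt{\kappa H})$ coincide, and every error term below is $o(\sqrt{\kappa H})$.

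\emph{Upper bound.} I use boundary coordinates $(s,t)$ near $\partial\Omega$ ($s$ arclength, $t=\operatorname{dist}(\cdot,\partial\Omega)$), in which the metric is $dt^2+(1-t\,k(s))^2\,ds^2$ with $k$ the curvature, and the natural gauge in which $\kappa H\mathbf F$ is represented by the $1$--form $\kappa H\bigl(-t+\tfrac12 t^2 k(s)+O(t^3)\bigr)\,ds$. Taking $\mathbf A=\mathbf F$ (so the field term vanishes) and, for $\epsilon>0$, a near--minimizer $(\xi_\epsilon,\phi_\epsilon)$ of $\mathcal J$ with $\mathcal J_{\xi_\epsilon}[\phi_\epsilon]<\mathfrak m(b)+\epsilon$, I set
\[
  \psi(s,t)=\phi_\epsilon\bigl(\sqrt{\kappa H}\,t\bigr)\,e^{i\sqrt{\kappa H}\,\xi'_\epsilon s}\,\chi\bigl(\sqrt{\kappa H}\,t/L\bigr),
\]
where $\chi$ is a fixed cut--off, $L=L(\kappa)\to\infty$ slowly, and $\xi'_\epsilon$ is the value nearest $\xi_\epsilon$ making the phase single--valued on $\partial\Omega$. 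Inserting $\psi$ into $\mathcal E_{\kappa,H}$, rescaling $t\mapsto\sqrt{\kappa H}\,t$ and integrating over $s$, the leading term is exactly $\sqrt{\kappa H}\,|\partial\Omega|\,\mathcal J_{\xi_\epsilon}[\phi_\epsilon]$; the curvature contributions each carry an extra factor $t=O((\kappa H)^{-1/2}\log\kappa)$, the cut--off region is exponentially small since $\phi_\epsilon$ decays, and the correction from $\xi'_\epsilon$ costs $O((\kappa H)^{-1/2})$. Hence $\mathcal E_{\kappa,H}[\psi,\mathbf F]\le -\sqrt{\kappa H}\,E_b\,|\partial\Omega|+\epsilon\sqrt{\kappa H}\,|\partial\Omega|+o(\sqrt{\kappa H})$, and letting $\epsilon\to0$ after $\kappa\to\infty$ gives the upper bound.

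\emph{Lower bound.} Let $(\psi,\mathbf A)$ be a minimizer. I first gather a priori information: the maximum principle gives $\|\psi\|_\infty\le1$; elliptic estimates for the second Ginzburg--Landau equation, in the gauge $\Div(\mathbf A-\mathbf F)=0$, $N\cdot(\mathbf A-\mathbf F)|_{\partial\Omega}=0$, give $\|\mathbf A-\mathbf F\|_{W^{2,2}(\Omega)}=O((\kappa H)^{-3/4})$, hence $\|\mathbf A-\mathbf F\|_{L^\infty}=O((\kappa H)^{-3/4})$ and $\|\curl\mathbf A-1\|_{L^\infty}=o(1)$; and Agmon--type exponential decay estimates---available because for $b>1$ the bulk effective potential $\kappa H-\kappa^2=\kappa^2(b-1)$ is strictly positive---show $\psi$ concentrates within distance $O((\kappa H)^{-1/2}\log\kappa)$ of $\partial\Omega$, whence $\|\psi\|_{L^2(\Omega)}^2=O((\kappa H)^{-1/2})$. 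These estimates let one replace $\kappa H\mathbf A$ by $\kappa H\mathbf F$ in the energy at cost $o(\sqrt{\kappa H})$. Next, fix an intermediate scale $\ell=\ell(\kappa)\to0$ with $\ell\sqrt{\kappa H}\to\infty$ (e.g.\ $\ell=(\kappa H)^{-1/4}$) and a partition of unity $\sum_j\chi_j^2\equiv1$ subordinate to a cover of $\overline\Omega$ by balls of radius $\ell$, with $\sum_j|\nabla\chi_j|^2\le C\ell^{-2}$ and $\sum_j\chi_j^4\equiv1$ off the transition set. The IMS formula gives
\[
  \mathcal E_{\kappa,H}[\psi,\mathbf A]\ \ge\ \sum_j\mathcal E^{(j)}[\chi_j\psi]-C\ell^{-2}\|\psi\|_{L^2}^2-o(\sqrt{\kappa H})\,,
\]
with $C\ell^{-2}\|\psi\|_{L^2}^2=o(\sqrt{\kappa H})$, where $\mathcal E^{(j)}$ is the energy localized to cell $j$. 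For a cell not meeting $\partial\Omega$, replacing $\mathbf F$ by its linearization (a constant field $\approx\kappa H$) bounds $\mathcal E^{(j)}[\chi_j\psi]$ below by $\int(\kappa H-\kappa^2)|\chi_j\psi|^2-o(\cdots)$, which is $\ge0$ up to $o(\sqrt{\kappa H})$ in total, since the bottom of the spectrum of the corresponding magnetic Laplacian is $\approx\kappa H>\kappa^2$. For a cell meeting $\partial\Omega$, passing to boundary coordinates, straightening the gauge and discarding the curvature (all errors controlled by the $O((\kappa H)^{-1/2}\log\kappa)$ normal extent of $\psi$), $\mathcal E^{(j)}[\chi_j\psi]$ is bounded below by the rescaled half--plane functional $\int_{{\mathbb R}_+\times{\mathbb R}}\bigl(|(\nabla-i\mathbf A_0)u|^2-\tfrac1b|u|^2+\tfrac1{2b}|u|^4\bigr)$ on a strip of tangential length $\asymp\ell\sqrt{\kappa H}\to\infty$, evaluated on the rescaled $\chi_j\psi$. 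A separate model lemma---proved by super--additivity and a direct construction---identifies $\mathfrak m(b)$ as the infimum of this functional per unit tangential length, so $\mathcal E^{(j)}[\chi_j\psi]\ge\sqrt{\kappa H}\,\mathfrak m(b)\,|\partial\Omega\cap\operatorname{supp}\chi_j|-o(\sqrt{\kappa H}\,\ell)$. Summing over the boundary cells and using $\sum_j\chi_j^2\equiv1$ yields $\mathcal E_{\kappa,H}[\psi,\mathbf A]\ge\sqrt{\kappa H}\,\mathfrak m(b)\,|\partial\Omega|-o(\sqrt{\kappa H})=-\sqrt{\kappa H}\,E_b\,|\partial\Omega|-o(\kappa)$.

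\emph{Main obstacle.} The upper bound is a direct computation once the boundary--coordinate gauge is fixed. The work is in the lower bound, and the crux is twofold. First, the a priori estimates must be strong enough to reduce to the half--plane model with $o(\sqrt{\kappa H})$ error---in particular $\|\psi\|_{L^2}^2=O((\kappa H)^{-1/2})$ and sufficiently precise control of $\mathbf A-\mathbf F$---which is obtained by bootstrapping the field equation together with the Agmon decay (and it is here that $b>1$ is essential); these must then be balanced against the IMS, flattening, gauge, and finite--length errors through the choice of $\ell$, so that all are simultaneously $o(\sqrt{\kappa H})$. Second, one needs the model lemma that $\mathfrak m(b)$, the cubic nonlinearity included, is exactly the half--plane energy per unit length; unlike the purely linear de Gennes bound, this does not fiber--decompose in the tangential variable and must be proved separately---the linear bound alone yields only the correct order $-C\sqrt{\kappa H}$, not the sharp constant $-E_b$.
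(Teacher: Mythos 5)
This statement is not proved in the paper at all: Theorem~\ref{thm:Pan} is quoted verbatim from Pan~\cite{pan2} as background, and the paper itself gives no argument for it. So there is no proof in the text to compare against. Nonetheless your sketch contains a genuine gap, and it is one the paper is in fact built around.

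You set $E_b := -\mathfrak m(b)$, where $\mathfrak m(b)$ is the infimum of the one-dimensional reduced functional $\mathcal J_\xi[\phi]$ --- which, with $\lambda = 1/b$, is exactly $\inf_{z,\phi}\mathcal F_{z,\lambda}(\phi)$ from~\eqref{eq:44}. Your ``model lemma'' then asserts that $\mathfrak m(b)$ equals the energy per unit tangential length of the full two-dimensional half-plane functional, ``proved by super-additivity and a direct construction.'' Super-additivity gives existence of the limiting density $e(b)$, and the $1$D trial state $\phi(t)e^{i\xi s}$ gives $e(b) \le \mathfrak m(b)$; but nothing in your sketch gives $e(b) \ge \mathfrak m(b)$. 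That reverse inequality is precisely the question of stability of the $1$D profile $f_{\zeta(\lambda),\lambda}$ against tangential perturbations, which is the spectral condition~\eqref{eq:27} of Theorem~\ref{eq:SpecThm}. The paper emphasizes that ``the constant $E_b$ is only defined as a limit and its calculation is not easy,'' and the identification $E_{\lambda^{-1}} = \frac{\lambda}{2}\|f_{\zeta(\lambda),\lambda}\|_4^4$ is a \emph{conditional} theorem (Theorem~\ref{eq:SpecThm}), known prior to this paper only for $\lambda$ near $\Theta_0$, and extended here to $\lambda \in [\Theta_0,0.8]$ --- still not all of $]\Theta_0,1[$. In other words, you are implicitly proving the much stronger Almog--Helffer identification for all $b\in\left]1,\Theta_0^{-1}\right[$, which is open, rather than Pan's existence statement.

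The fix is to define $E_b := -e(b)$, the thermodynamic limit of the two-dimensional half-plane energy density (which exists by sub/super-additivity), and to build the upper bound from genuine half-plane near-minimizers, suitably cut off and periodized and transplanted into boundary coordinates, not from $1$D profiles. With that change, the rest of your architecture for the lower bound --- $\|\psi\|_\infty\le 1$, Agmon decay on scale $(\kappa H)^{-1/2}$ since $b>1$, control of $\mathbf A-\mathbf F$ from the second Ginzburg--Landau equation, IMS localization on scale $\ell$ with $(\kappa H)^{-1/2}\ll\ell\ll 1$, boundary flattening, and comparison to the half-plane --- is the standard and correct route, and essentially Pan's. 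The error in your present write-up is that the upper and lower bounds only match under the unproved hypothesis $e(b)=\mathfrak m(b)$.
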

Local energy results are also obtained in~\cite{pan2}. Theorem~\ref{thm:Pan} 
indicates that superconductivity is uniformly distributed along the
boundary. However, the constant $E_b$ is only defined as 
 a limit and  its calculation is not easy. A number
of conjectures related to the calculation of $E_b$ are given in~\cite{pan2}.
In~\cite{AlHe} (see also~\cite[Chapter~14]{fohebook}), the constant $E_b$ is
determined for $b$ in the vicinity of $\Theta_0^{-1}$. It turns out
that the determination of
the constant in 
this {\it non-linear} problem can be reduced to the positivity of a
{\it linear} operator. 
Define the space ${\mathcal B}^1({\mathbb R}^+)$ as
\begin{align}
  \label{eq:29}
  {\mathcal B}^1({\mathbb R}^+) = \{ \phi \in L^2({\mathbb R}^+) \,:\,
  \phi' \in  L^2({\mathbb R}^+) \text{ and } t \phi \in  L^2({\mathbb R}^+)\}.
\end{align}
Define, for $z \in {\mathbb R}$, $\lambda>0$,
\begin{align}
  \label{eq:44}
  {\mathcal F}_{z,\lambda}(\phi)&:= 
\int_0^{+\infty}|\phi'(t)|^2 + (t-z)^2 |\phi(t)|^2 +
\frac{\lambda}{2} |\phi(t)|^4 - \lambda |\phi(t)|^2\,dt\,,
\end{align}
and let $f_{z,\lambda}$ be a non-negative minimizer of this functional
(see Theorem~\ref{thm:Sammenkog} below for properties of
minimizers---in particular the fact that $f_{z,\lambda}$ exists and is unique).

For given $\lambda >0$,  minimize ${\mathcal F}_{z,\lambda}(f_{z,\lambda})$ over
$z$ and denote a minimum by $\zeta(\lambda)$---we will prove below that such a 
minimum exists when $\lambda \in \left]\Theta_0,1\right]$. 
By definition of $f_{\zeta(\lambda) ,\lambda}$,
\begin{align}\label{eq:26}
  {\mathcal F}_{z,\lambda}(\phi) 
\geq {\mathcal F}_{\zeta(\lambda),\lambda}(f_{\zeta(\lambda),\lambda}),
\end{align}
for all $(z,\phi) \in {\mathbb R} \times {\mathcal B}^1({\mathbb R^+})$. 

We also introduce a linear operator ${\mathfrak k}_{\lambda}$. 
Define, for $\nu \in {\mathbb R}$, $\lambda \in {\mathbb R^+}$, the operator 
${\mathfrak k}_{\lambda}= {\mathfrak k}_{\lambda}(\nu)$ to be the Neumann 
realization of
\begin{align}\label{eq:1b}
  {\mathfrak k}_{\lambda}(\nu) = -\frac{d^2}{dt^2} + (t - \nu)^2 
  + \lambda f_{\zeta(\lambda),\lambda}(t)^2,
\end{align}
on $L^2({\mathbb R}_{+})$. We denote by 
$\{\lambda_j(\nu)\}_{j=1}^{\infty}$ the spectrum of 
${\mathfrak k}_{\lambda}(\nu)$. Also 
$\{ v_j(t;\nu)\}_{j=1}^{\infty}$ will be the associated real, 
normalized eigenfunctions.

\begin{remark}
Notice the following complication: Since we do not know that $\zeta(\lambda)$ is
unique, the operator ${\mathfrak k}_{\lambda}(\nu)$ is really a family of 
operators,
\[
{\mathfrak k}^{(j)}_{\lambda}(\nu) =  
-\frac{d^2}{dt^2} + (t - \nu)^2 + \lambda f_{\zeta_j(\lambda),\lambda}(t)^2,
\]
one for every minimum $\zeta_j(\lambda)$.
\end{remark}

It follows from~\cite{AlHe,fohebook} that

\begin{theorem}\label{eq:SpecThm}
Let $\lambda \in \left]\Theta_0,1\right[$. 
Suppose that there exists a minimum $\zeta(\lambda)$ such that for the 
corresponding choice of the operator ${\mathfrak k}_{\lambda}(\nu)$ we have
\begin{align}
  \label{eq:27}
\lambda \leq \inf_{\nu\in\mathbb{R}} \lambda_1(\nu)\,.
\end{align}
Then
\begin{align}
  \label{eq:28}
  E_{\lambda^{-1}} = \frac{\lambda}{2} \|
  f_{\zeta(\lambda),\lambda}\|_{L^4({\mathbb R}^+)}^4.
\end{align}
\end{theorem}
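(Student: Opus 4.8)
The plan is to derive the formula~\eqref{eq:28} for $E_{\lambda^{-1}}$ by combining the known asymptotic upper and lower bounds for the Ginzburg-Landau energy with the variational structure of the one-dimensional functional ${\mathcal F}_{z,\lambda}$. The starting point is Theorem~\ref{thm:Pan}, which tells us that for $H=\kappa b$ with $b=\lambda^{-1}\in\left]1,\Theta_0^{-1}\right[$ the ground state energy equals $-\sqrt{\kappa H}\,E_b|\partial\Omega|+o(\kappa)$; the task is to identify $E_b$. I would first recall from~\cite{AlHe,fohebook} the construction that reduces the nonlinear boundary problem to the effective one-dimensional model: by a boundary-coordinate change (tubular neighborhood coordinates, rescaling by $\sqrt{\kappa H}$) and a Fourier/frequency decomposition along the boundary, a trial state built from $f_{\zeta(\lambda),\lambda}$ gives the upper bound $E_b\le \frac{\lambda}{2}\|f_{\zeta(\lambda),\lambda}\|_{L^4}^4$. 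This direction uses only the definition of $\zeta(\lambda)$ and $f_{\zeta(\lambda),\lambda}$ and the fact, implicit in~\eqref{eq:44}, that at a minimizer ${\mathcal F}_{\zeta(\lambda),\lambda}(f_{\zeta(\lambda),\lambda}) = -\tfrac{\lambda}{2}\|f_{\zeta(\lambda),\lambda}\|_{L^4}^4$ (an Euler-Lagrange identity: testing the equation against $f$ and comparing with the functional).

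The substance is the matching lower bound, and this is where the spectral hypothesis~\eqref{eq:27} enters. The plan is: localize the energy near the boundary via a partition of unity (standard IMS-type estimates control the localization error at order $o(\kappa)$, using that in the bulk the linear problem already forces $\psi$ small because $b>1$). On each boundary patch, after the coordinate change and rescaling, the GL energy density is bounded below by the one-dimensional energy ${\mathcal F}_{z,\lambda}$ integrated over the frequency parameter $z$ (with curvature corrections that are lower order), and~\eqref{eq:26} gives ${\mathcal F}_{z,\lambda}(\phi)\ge {\mathcal F}_{\zeta(\lambda),\lambda}(f_{\zeta(\lambda),\lambda}) = -\tfrac{\lambda}{2}\|f_{\zeta(\lambda),\lambda}\|_{L^4}^4$ pointwise in $z$. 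Integrating over the boundary and over the admissible range of $z$ (a range of length $\sim\sqrt{\kappa H}\,|\partial\Omega|$, up to the effective density factor) reproduces $-\sqrt{\kappa H}\,\bigl(\tfrac{\lambda}{2}\|f_{\zeta(\lambda),\lambda}\|_{L^4}^4\bigr)|\partial\Omega| + o(\kappa)$.

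The role of~\eqref{eq:27} is to guarantee that this naive lower bound, obtained by throwing away the coupling between different frequencies $z$ and between the modulus and phase of $\psi$, is in fact \emph{sharp} to leading order: the linearized operator ${\mathfrak k}_\lambda(\nu)$ governs the second variation of the localized energy around the profile $f_{\zeta(\lambda),\lambda}$, and the condition $\lambda\le\inf_\nu\lambda_1(\nu)$ says precisely that the quadratic form of $({\mathfrak k}_\lambda(\nu)-\lambda)$ is nonnegative, so no energy can be gained by deforming away from the product profile. Concretely I would write $\psi$ in the boundary patch in the form $f_{\zeta(\lambda),\lambda}\cdot(\text{modulation})$ plus an orthogonal remainder, expand the energy to second order, and use~\eqref{eq:27} (integrated in the frequency variable, and handling the quartic remainder terms with a Cauchy-Schwarz / a priori $L^\infty$ bound on $\psi$) to show the remainder contributes a nonnegative amount up to $o(\kappa)$. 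This reconciling of the lower bound with the upper bound is the main obstacle: one must carry out the second-order expansion uniformly in the frequency parameter, control the curvature and magnetic-gauge corrections from the coordinate change at the right order, and absorb the super-quadratic terms—none individually deep, but collectively the heart of the argument, and essentially the content of~\cite{AlHe,fohebook} that we are invoking.

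Thus the proof is mostly an assembly: upper bound from a product trial state, lower bound from frequency-wise application of~\eqref{eq:26} plus the spectral positivity~\eqref{eq:27} to close the gap, and Theorem~\ref{thm:Pan} to conclude that the limiting constant $E_{\lambda^{-1}}$ must equal $\frac{\lambda}{2}\|f_{\zeta(\lambda),\lambda}\|_{L^4({\mathbb R}^+)}^4$.
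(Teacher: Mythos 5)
The paper does not prove Theorem~\ref{eq:SpecThm}: it is imported directly from~\cite{AlHe} and~\cite[Chapter~14]{fohebook} with the phrase ``It follows from~\cite{AlHe,fohebook} that \dots'', and no argument is given, so there is no in-paper proof to compare your attempt against. Your outline is a reasonable summary of what those references do: an upper bound from a product trial state built on $f_{\zeta(\lambda),\lambda}$ (using ${\mathcal F}_{\zeta,\lambda}(f_{\zeta,\lambda})=-\tfrac{\lambda}{2}\|f_{\zeta,\lambda}\|_4^4$, which you correctly derive from the Euler-Lagrange equation---compare~\eqref{eq:10b} and~\eqref{eq:54} in the paper), and a matching lower bound via boundary localization, frequency reduction, and the observation that~\eqref{eq:27} is precisely the positivity of the second variation that makes the fiber-wise bound~\eqref{eq:26} sharp. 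You have identified the correct structural picture and the correct role of~\eqref{eq:27}.

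That said, your write-up is not a proof so much as a faithful road map of the proof being cited. The substantive work---tubular-neighborhood coordinates, gauge normalization, curvature corrections, the uniform-in-frequency second-order expansion, and the absorption of the quartic remainder and cross-frequency coupling terms---is all deferred, by you as by the paper, to~\cite{AlHe,fohebook}. Since the present paper also makes no attempt to re-prove the theorem, treating it as an input, your approach is consistent with the paper's treatment; just be aware that what you have written is a correct description of the argument rather than the argument itself.
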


It is also proved in~\cite{AlHe,fohebook} (see Proposition~14.2.13 
in~\cite{fohebook}) that there exists $\epsilon >0$ such 
that~\eqref{eq:27} is satisfied for 
$\lambda \in \left]\Theta_0, \Theta_0+\epsilon\right[$.
The objective of the present paper is to give explicit bounds on the magnitude 
of $\epsilon$.

\begin{remark}
A minimizer $f_{z,\lambda}$ of the functional ${\mathcal F}_{z,\lambda}$ will
be a solution to the Euler-Lagrange equations for the minimization 
problem~\eqref{eq:44}
\begin{align}\label{eq:9b}
-u'' + (t-z)^2 u + \lambda |u|^2 u = \lambda u, \qquad u'(0) = 0.  
\end{align}
In particular, when
$\nu=\zeta(\lambda)$ we have 
$\lambda_1(\nu) =\lambda$, since (by~\eqref{eq:9b} with $z =
\zeta(\lambda)$) $f_{\zeta(\lambda),\lambda}$ will be a positive
eigenfunction of ${\mathfrak k}_{\lambda}(\zeta(\lambda))$.
\end{remark}

\subsection{Main results} 
We are not able to prove~\eqref{eq:27} for all $\lambda\in]\Theta_0,1]$. Here we
state some partial results. Clearly, $\nu=\zeta$ is a stationary point for 
$\lambda_1(\nu)$. Our first result shows that this is a local minimum.   

\begin{theorem}\label{thm:nuzeta}~
\begin{enumerate}
\item Let $\Theta_0<\lambda\leq 1$. Then $\lambda_1(\nu)$ has a local minimum 
for $\nu=\zeta,$ i.e., there exist positive constants 
$\delta_\lambda$ and $c_\lambda$ such that for all 
$|\nu-\zeta|<\delta_\lambda$ it holds that
\[
\lambda_1(\nu)\geq \lambda + c_\lambda(\nu-\zeta)^2.
\]
\item Let $\lambda > \Theta_0,$ $z \in {\mathbb R},$ and let $f_{z,\lambda}$ be 
a positive minimizer of ${\mathcal F}_{z,\lambda}$. 
  Define
  \begin{align}\label{eq:61b}
  \lambda_1(\nu;z) :=  \inf \Spec\Big\{-\frac{d^2}{dt^2} + (t-\nu)^2 + \lambda
      f_{z,\lambda}^2 \Big\} ,
\end{align}
where we consider the Neumann
realization on $L^2({\mathbb R}^+)$ of the operator.

Then, $\lambda_1(\nu;z) \rightarrow 1$ as $\nu \rightarrow +\infty$. 
Furthermore, there exists $\nu_0=  \nu_0(\lambda,z)>0$ such that 
  \begin{equation}
    \label{eq:61}
     \lambda_1(\nu;z)> 1,
  \end{equation}
for all $\nu \geq \nu_0$.
\end{enumerate}
\end{theorem}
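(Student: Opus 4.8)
My plan is to analyze the operator $-\frac{d^2}{dt^2} + (t-\nu)^2 + \lambda f_{z,\lambda}^2$ on $L^2(\mathbb{R}^+)$ with Neumann condition at $0$ by comparing it, as $\nu \to +\infty$, with the model Neumann de Gennes operator $-\frac{d^2}{dt^2} + (t-\nu)^2$, whose bottom eigenvalue is $\Theta(\nu)$, the familiar function satisfying $\Theta(\nu) \to 1$ as $\nu \to +\infty$ (and, crucially, $\Theta(\nu) > \Theta_0$ always, with $\Theta(\nu) \to 1^-$ monotonically for $\nu$ large). Since $\lambda f_{z,\lambda}^2 \geq 0$, the variational principle immediately gives $\lambda_1(\nu;z) \geq \Theta(\nu)$, so $\liminf_{\nu\to\infty} \lambda_1(\nu;z) \geq 1$; for the matching upper bound I would insert the normalized ground state $u_\nu$ of the model operator as a trial function and show $\int_0^\infty \lambda f_{z,\lambda}^2 |u_\nu|^2\,dt \to 0$. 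This last decay uses that $f_{z,\lambda} \in \mathcal{B}^1(\mathbb{R}^+)$ decays (in fact, by the Euler–Lagrange equation \eqref{eq:9b} and Agmon-type estimates, $f_{z,\lambda}$ decays super-exponentially away from $0$), while the mass of $|u_\nu|^2$ escapes to $+\infty$: the ground state of $-\frac{d^2}{dt^2}+(t-\nu)^2$ on $\mathbb{R}^+$ concentrates near $t = \nu \to \infty$, so $\int_0^R |u_\nu|^2 \to 0$ for every fixed $R$. Combining, $\lambda_1(\nu;z) \to 1$.

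For the strict inequality \eqref{eq:61} I need more than the limit; I need $\lambda_1(\nu;z) > 1$ for all large $\nu$, not merely $\lambda_1(\nu;z) \to 1$. The clean way is a two-term lower bound. I would localize: pick a cutoff $\chi_R$ supported in $[0,2R]$ and $\tilde\chi_R$ supported in $[R,\infty)$ with $\chi_R^2 + \tilde\chi_R^2 = 1$, apply the IMS formula, and estimate the two pieces. On the support of $\tilde\chi_R$ (i.e. $t \geq R$ with $R$ fixed but large), the potential term $(t-\nu)^2 + \lambda f_{z,\lambda}^2 \geq (t-\nu)^2$ and the Neumann operator there is bounded below on that half-line by something exceeding $1$ once $\nu$ is chosen in the right range relative to $R$ — more precisely I would use that on $L^2((R,\infty))$ with Neumann condition at $R$, the operator $-\frac{d^2}{dt^2} + (t-\nu)^2$ has ground state energy $\to 1$ but from above if $\nu < R$, or I instead keep $R$ growing slowly with $\nu$. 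On the support of $\chi_R$ the extra positive term $\lambda f_{z,\lambda}^2$ contributes a genuinely positive amount: here the point is that the model ground state restricted to $[0,2R]$ still carries mass bounded below, or — better — I note that on the bounded interval the first eigenvalue of $-\frac{d^2}{dt^2} + (t-\nu)^2 + \lambda f_{z,\lambda}^2$ strictly exceeds that of $-\frac{d^2}{dt^2}+(t-\nu)^2$ by a quantitative gap because $f_{z,\lambda}^2$ is not identically zero on $[0,2R]$. The cleanest incarnation: for $\nu$ large, $\Theta(\nu) = 1 - \text{(exponentially small)} $, whereas the perturbation from $\lambda f_{z,\lambda}^2$, evaluated against the true ground state $v_1(\cdot;\nu)$ of $\mathfrak{k}$-type operator, is bounded below by $c \int_0^1 f_{z,\lambda}^2 |v_1|^2$; so I want a lower bound on $|v_1(t;\nu)|$ on a fixed interval near $0$ that decays only polynomially, not exponentially, in $\nu$ — this would beat the exponentially small deficit $1 - \Theta(\nu)$.

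The main obstacle I anticipate is exactly the quantitative balance in that last step: controlling $\int_0^\infty f_{z,\lambda}^2 |v_1(t;\nu)|^2\,dt$ from below relative to $1 - \Theta(\nu)$. Both quantities are small as $\nu \to \infty$, so I must be careful which is smaller. Since $1 - \Theta(\nu)$ decays like $e^{-c\nu^2}$ while the mass of $|v_1(\cdot;\nu)|^2$ near the origin decays only like $e^{-c'\nu^2}$ as well, I will need the sharp constants: the Gaussian-type tail of $v_1$ near $0$ is governed by $e^{-\nu^2/2 \cdot (1 + o(1))}$ whereas $1 - \Theta(\nu) \sim \frac{1}{\sqrt{\pi}} \nu e^{-\nu^2}(1+o(1))$ decays with twice the rate in the exponent, so the perturbation dominates and the strict inequality holds for $\nu$ large. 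Making this comparison rigorous — tracking the precise exponential rates of $1-\Theta(\nu)$ (classical, e.g. from \cite{fohebook}) and of the ground state amplitude of $\mathfrak{k}_\lambda(\nu)$ near $0$ via an Agmon/WKB estimate for the Neumann problem — is the technical heart of the argument; once those two asymptotics are in hand, \eqref{eq:61} follows by choosing $\nu_0$ so large that the polynomially-corrected $e^{-\nu^2/2}$ term wins over the $e^{-\nu^2}$ term, and monotonicity of $\Theta$ for large $\nu$ lets us conclude it holds for all $\nu \geq \nu_0$.
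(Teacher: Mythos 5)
Your overall strategy for the strict inequality --- comparing the gain from the nonlinear term $\lambda f_{z,\lambda}^2$ against the deficit $1-\mu_1(\nu)$ --- is the same organizing idea as in the paper, and your argument for the limit $\lambda_1(\nu;z)\to 1$ (upper bound via $u_1(\cdot;\nu)$ as a trial state, plus the trivial lower bound $\lambda_1\geq\mu_1(\nu)$) is exactly the paper's. But there is a genuine quantitative error in the heart of your lower-bound argument, and as written your comparison would actually go the \emph{wrong} way.

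You propose to bound $\int f_{z,\lambda}^2 |v_1(\cdot;\nu)|^2$ from below by its contribution \emph{on a fixed interval near the origin}, using that $v_1(t;\nu)\sim e^{-\nu^2/2(1+o(1))}$ there. But what enters the integral is $|v_1|^2\sim e^{-\nu^2(1+o(1))}$, so $\int_0^1 f^2|v_1|^2\sim e^{-\nu^2(1+o(1))}$. Compare with $1-\mu_1(\nu)\sim C\,\nu\,e^{-\nu^2}$: the two decay at the same Gaussian rate, and the deficit even carries a polynomial prefactor $\nu$ that your integral lacks. So on a fixed interval near $t=0$ you do \emph{not} win; if anything the deficit dominates. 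Your sentence asserting a ``polynomially decaying'' lower bound for $v_1$ near $0$ is also wrong --- the ground state is genuinely Gaussian-small at $t=0$ when $\nu$ is large. The missing idea is \emph{where} to evaluate the overlap: the product of the two Gaussian profiles $f^2\sim e^{-(t-z)^2}$ and $|u_1|^2\sim e^{-(t-\nu)^2}$ is maximal near the midpoint $t\approx(\nu+z)/2\approx\nu/2$, and on an $O(1)$ interval around $\nu/2$ one finds $f^2|u_1|^2\gtrsim e^{-\beta'\nu^2/2}$ for any $\beta'>1$. Since $e^{-\beta'\nu^2/2}\gg \nu e^{-\nu^2}$, this is the term that beats the deficit. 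That is precisely inequalities \eqref{eq:67}--\eqref{eq:68} in the paper.

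There is a second, smaller structural issue. To use the decomposition $\lambda_1(\nu)\geq\mu_1(\nu)+\lambda\|f\,v_1\|_2^2$ with the \emph{true} eigenfunction $v_1$ of ${\mathfrak k}_\lambda(\nu)$, you need a lower bound on $|v_1|$ in the region where $f$ lives; but $v_1$ is not explicit, and you would have to show it is close to $u_1(\cdot;\nu)$ in the relevant region --- a priori, the perturbation could repel $v_1$ from exactly the place you want to be large. The paper avoids this circularity entirely by using Temple's inequality with the \emph{known} function $u_1(\cdot;\nu)$ as the trial state, which yields the clean two-sided formula
\[
\tilde\lambda_1(\nu)\ \geq\ \mu_1(\nu) + \lambda\|f\,u_1\|_2^2 - C\,\lambda^2\|f^2 u_1\|_2^2 ,
\]
and then estimates $\|f\,u_1\|_2^2\gtrsim e^{-\beta'\nu^2/2}$ from below (midpoint interval) and $\|f^2 u_1\|_2^2\lesssim e^{-2\alpha'\nu^2/3}$ from above, so the positive linear term dominates both the error term and $1-\mu_1(\nu)$. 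I would recommend rewriting your argument around Temple's inequality (or a comparable second-order perturbation bound) with $u_1(\cdot;\nu)$ as the explicit trial state, and moving your interval of comparison from near $t=0$ to near $t=\nu/2$; the IMS localization you sketch is not wrong but is unnecessary once that is done.
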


\begin{remark}
In particular, the second item in Theorem~\ref{thm:nuzeta} implies
that~\eqref{eq:27} is not true for $\lambda >1$. It is therefore
natural to expect that~\eqref{eq:27} will be valid if and only if
$\lambda \in \left]\Theta_0, 1\right]$. Notice that we will not prove that a
minimum $\zeta(\lambda)$ exists for $\lambda >1$. This explains the
somewhat cumbersome statement in the second item in
Theorem~\ref{thm:nuzeta}. 
\end{remark}

We also obtain an explicit range of values of $\lambda$ for which the 
condition~\eqref{eq:27} is satisfied. The results contain some explicit 
universal constants that will be defined later. In this introduction we will 
only state the numerical values obtained.

\begin{theorem}\label{thm:largenu}~
\begin{itemize}
\item[(i)] Let $\Theta_0<\lambda\leq 1$. For all $\nu\leq 1.33$ it holds that 
$\lambda_1(\nu)\geq\lambda$. 
\item[(ii)] Let $\Theta_0\leq\lambda\leq 0.8$. 
Then~\eqref{eq:27} holds, i.e.
\begin{equation*}
\inf_{\nu\in\mathbb{R}}\lambda_1(\nu)\geq \lambda.
\end{equation*}
\end{itemize}
\end{theorem}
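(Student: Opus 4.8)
The plan is to reduce the inequality $\lambda_1(\nu)\ge\lambda$ to the nonnegativity of a weighted one-dimensional quadratic form and to split the analysis according to $\nu$. Since $\mathfrak k_\lambda(\nu)\ge-\frac{d^2}{dt^2}+(t-\nu)^2$, the Neumann realization on $L^2(\mathbb R^+)$ of the de Gennes operator, one has $\lambda_1(\nu)\ge\mu_1(\nu)$, where $\mu_1(\nu)$ is the bottom of its spectrum; recall $\mu_1(\nu)\ge1$ for $\nu\le0$, $\mu_1$ attains its minimum $\Theta_0$ at $\nu=\xi_0=\sqrt{\Theta_0}$, and $\mu_1$ increases back towards $1$ for $\nu\ge\xi_0$. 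Hence $\lambda_1(\nu)\ge\lambda$ is automatic on the set $\{\mu_1(\nu)\ge\lambda\}$, and only the ``core'' interval of $\nu$ around $\xi_0$ on which $\mu_1(\nu)<\lambda$ needs real work in (i). Granting (i), part (ii) is then obtained by handling $\nu>1.33$ through a quantitative lower bound on $\mu_1$: it suffices that $\mu_1(1.33)\ge0.8$, and monotonicity of $\mu_1$ on $[\xi_0,\infty)$ gives $\mu_1(\nu)\ge0.8\ge\lambda$ for all $\nu\ge1.33$; the case $\lambda=\Theta_0$ is trivial since then $f\equiv0$ and $\lambda_1(\nu)=\mu_1(\nu)\ge\Theta_0$.

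For the core regime I would use the ground-state substitution. Write $f=f_{\zeta(\lambda),\lambda}$, set $u=fv$, and use the Euler--Lagrange equation~\eqref{eq:9b} with $z=\zeta(\lambda)$ together with $f'(0)=0$; an integration by parts gives
\[
\big\langle u,(\mathfrak k_\lambda(\nu)-\lambda)u\big\rangle
=\int_0^\infty f^2\,|v'|^2\,dt+\int_0^\infty\bigl[(t-\nu)^2-(t-\zeta)^2\bigr]f^2\,|v|^2\,dt .
\]
Thus $\lambda_1(\nu)\ge\lambda$ is equivalent to nonnegativity of the right-hand side for all admissible $v$. Using $(t-\nu)^2-(t-\zeta)^2=(\nu-\zeta)^2-2(\nu-\zeta)(t-\zeta)$ and the first-variation identity $\int_0^\infty(t-\zeta)f^2\,dt=0$ at $\zeta$ (so the form vanishes on constants), the task becomes: show the weighted Dirichlet term $\int f^2|v'|^2$ dominates $2(\nu-\zeta)\int(t-\zeta)f^2|v|^2$ up to the nonnegative term $(\nu-\zeta)^2\int f^2|v|^2$, when $\nu\le1.33$.

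The technical core is a package of explicit estimates on $\zeta(\lambda)$ and on $f$: (a) confinement of $\zeta(\lambda)$ to an explicit interval, so that $\nu-\zeta$ and the sign-change point $(\zeta+\nu)/2$ of $(t-\nu)^2-(t-\zeta)^2$ are controlled for $\nu\le1.33$; (b) the a priori bound $\|f\|_\infty\le1$ read off the equation, together with an explicit Agmon-type pointwise decay estimate for $f$, which makes the negative contribution from the region $t>(\zeta+\nu)/2$ exponentially small; and (c) a weighted Poincar\'e/Hardy inequality $\int_0^\infty f^2(v-c)^2\,dt\le K\int_0^\infty f^2|v'|^2\,dt$ for a suitable $c=c(v)$, combined with a pointwise control of $v$ in the tail by $v(\zeta)^2$ plus $\int f^2|v'|^2$ (Cauchy--Schwarz against $\int 1/f^2$ on a bounded interval), to absorb the negative term into the Dirichlet term. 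Assembling (a)--(c) with the claimed numerology yields $\lambda_1(\nu)\ge\lambda$ for $\nu\le1.33$, and part (ii) follows as above. Because uniqueness of $\zeta(\lambda)$ is not available, every estimate must be uniform over the a priori set of minimizers.

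I expect the main obstacle to be making the constants in (b) and (c) simultaneously explicit and tight enough to reach $\nu=1.33$ in (i) and $\lambda=0.8$ in (ii): an exponentially small Gaussian tail of $f$ must outrun a quadratically growing coefficient over an interval whose right endpoint is not small, so the decay estimate has to be quantitative and close to optimal, and $K$ must be bounded in terms of the confined range of $\zeta$ and the profile bounds on $f$ rather than abstractly. A secondary difficulty is the gluing: one must verify that for every $\lambda\in\left]\Theta_0,1\right]$ the set $\{\nu:\mu_1(\nu)<\lambda\}$ lies inside the interval on which the substitution argument delivers the bound, i.e. that the two regimes genuinely overlap for all relevant $\lambda$, and for (ii) that the cited lower bound on $\mu_1$ indeed gives $\mu_1(1.33)\ge0.8$.
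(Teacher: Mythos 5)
Your reduction to nonnegativity of the weighted quadratic form $Q(v)=\int f^2|v'|^2+\int[(t-\nu)^2-(t-\zeta)^2]f^2|v|^2$ via the ground-state substitution $u=fv$ is correct and exactly equivalent to $\lambda_1(\nu)\geq\lambda$; and the observation that $\lambda_1(\nu)\geq\mu_1(\nu)$ disposes of $\nu\notin\mathfrak I(\lambda)$ for free agrees with the paper. But from there you and the paper diverge sharply, and your plan for part (ii) has a concrete error.

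On part (ii): you propose to cover $\nu>1.33$ for $\lambda\leq 0.8$ by the linear bound alone, and you correctly flag that this requires $\mu_1(1.33)\geq 0.8$. That inequality is \emph{false}. The paper's own numerics state that $\mu_1(\nu)=0.8$ at $\nu\approx 1.496$, so $\mu_1(1.33)<0.8$; hence $\mathfrak I(0.8)$ extends to roughly $\nu\approx 1.5$, and for $\lambda$ close to $0.8$ and $\nu\in(1.33,1.5)$ the bound $\lambda_1(\nu)\geq\mu_1(\nu)$ is strictly below $\lambda$. The interval $(1.33,1.5)$ therefore falls between the two regimes of your argument and is not covered. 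The paper avoids this by not reducing (ii) to (i): it re-runs the core estimate on the wider window $\nu\leq 1.5$, exploiting that the slack in the Temple bracket grows as $\lambda$ decreases from $1$ to $0.8$ (the term $3\lambda$ shrinks while $\mu_2(\nu)$ is essentially unchanged), so the same inequality that barely closes at $(\nu,\lambda)=(1.33,1)$ has extra room to reach $\nu=1.5$ at $\lambda=0.8$. Your plan has no analogous mechanism letting the core estimate reach farther when $\lambda$ is smaller, so (ii) does not follow from (i) as you claim.

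On part (i), your route is genuinely different from the paper's and I am skeptical it closes. The paper does not attempt to prove $Q(v)\geq 0$ by Hardy/Poincar\'e absorption; it uses Temple's inequality with the trial state $f/\|f\|_2$, giving
\[
\lambda_1(\nu)\geq\lambda+(\nu-\zeta)^2\Bigl[1-\frac{4\|(t-\zeta)f\|_2^2}{(\lambda_2(\nu)-\lambda-(\nu-\zeta)^2)\|f\|_2^2}\Bigr],
\]
and then controls the bracket by (a) the virial identity $2\|(t-\zeta)f\|_2^2=\lambda\|f\|_2^2-\tfrac{3\lambda}{4}\|f\|_4^4$ together with the lower bound on $\|f\|_4$ from the trial state $\rho u_1(\cdot;\xi_0)$, (b) $\lambda_2(\nu)\geq\mu_2(\nu)$ with an explicit lower bound for $\mu_2$, and (c) confinement $\zeta\in[\xi_0,\sqrt\lambda]$. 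The point is that all quantities entering the bracket are integrated ($L^2$, $L^4$) norms of $f$ for which the paper has sharp, universal bounds; nothing pointwise about $f$ is needed. Your plan instead needs an explicit Agmon decay estimate for $f$ and an explicit weighted Poincar\'e constant $K$, both uniformly over the unknown set $\mathcal X(\lambda)$ of minimizers; the paper's decay bound~\eqref{eq:46} is stated only with non-explicit constants $c_\epsilon,C_\epsilon$, so this is a genuinely new task, and you acknowledge yourself that you do not see how to make the constants tight enough to reach $\nu=1.33$. Even structurally there is a difficulty: writing the cross term as $-4(\nu-\zeta)\int g\,vv'$ with $g(t)=\int_t^\infty(s-\zeta)f^2\,ds$ and Cauchy--Schwarzing against the Dirichlet term leaves you needing a pointwise bound of the type $4g(t)^2\leq f(t)^4$, and in the Gaussian tail $g(t)\sim f(t)^2/2$, so that inequality is borderline; this is precisely the kind of sharpness issue that the Temple route sidesteps entirely. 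I would regard part (i) of your proposal as an interesting but unverified alternative strategy rather than a proof.
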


\begin{figure}[H]
\includegraphics{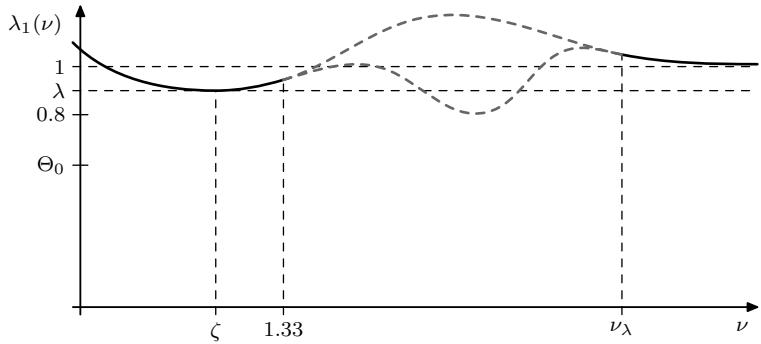}
\caption{A schematic picture of what we know about $\lambda_1(\nu)$ from 
Theorems~\ref{thm:nuzeta} and~\ref{thm:largenu}. The gray dashed parts show two 
possible scenarios.}
\label{fig:whatweknow}
\end{figure}

In Section~\ref{sec:linear} we recall some well-known results about the linear
de Gennes operator, and give some new spectral estimates. In 
Section~\ref{sec:nonlinear} we study the nonlinear problem appearing from the
functional $\mathcal{F}_{z,\lambda}(\phi)$ in~\eqref{eq:44} and 
prove~\eqref{eq:61}. In Section~\ref{sec:mainop} we consider the operator 
$\mathfrak{k}_\lambda(\nu)$ and prove the remainder of Theorem~\ref{thm:nuzeta} 
and Theorem~\ref{thm:largenu}.

\section{The linear problem}\label{sec:linear}

\subsection{Reminder for the de Gennes operator}
Define 
\begin{align} \label{defmathfrakh}
{\mathfrak h}(\xi) = -\frac{d^2}{dt^2} + (t - \xi)^2,
\end{align}
in $L^2({\mathbb R}^{+})$ with Neumann boundary conditions at $0$. We will
denote the eigenvalues of this operator by $\{\mu_j(\xi)\}_{j=1}^{\infty}$ and
corresponding (real normalized) eigenfunctions by $u_j(t) =
u_j(t;\xi)$.

From a similar calculation as the one leading to~(A.18) in~\cite{bohe},
\begin{equation}\label{eq:muonebohe}
\mu_1(\xi) \geq 1 - C_1\xi\exp(-\xi^2),
\end{equation}
for some constant $C_1>0$ and for sufficiently large $\xi$. As part of the 
proof of Proposition~\ref{prop:Asympu} below we will obtain a weaker 
asymptotics of $\mu_1(\xi)$.

A basic identity from perturbation theory (Feynman-Hellmann) is
\begin{align}
  \label{eq:1}
  \mu_j'(\xi) = -2 \int_0^{+\infty}  (t -\xi)  |u_j(t;\xi)|^2\,dt.
\end{align}

An integration by parts, combined with the equation satisfied by
$u_j(t;\xi)$ yields the useful alternative formula from 
Dauge-Helffer~\cite{DaugeHelffer}:
\begin{align}
  \label{eq:4}
  \mu_j'(\xi) = (\xi^2-\mu_j(\xi))  |u_j(0;\xi)|^2\,.
\end{align}
From~\eqref{eq:4} it is simple to deduce that $\mu_j$ has a unique minimum 
attained at $\xi_0^{(j)}$ satisfying 
\begin{align}
  \label{eq:5}
  \mu_j(\xi_0^{(j)}) = (\xi_0^{(j)})^2\,.
\end{align}
Notice that, from~\eqref{eq:1}, we obtain 
\begin{align}
  \label{eq:6}
  \xi_0^{(j)} > 0\,,
\end{align}
for all $j$. We will sometimes write $\xi_0=\xi_0^{(1)}$. By definition
\begin{align}
\label{eqTheta_0}
\Theta_0 = \inf_{\xi\in\mathbb{R}} \mu_1(\xi) = \mu_1(\xi_0^{(1)}) 
= (\xi_0^{(1)})^2.
\end{align}
Finally, we recall that 
\begin{equation}
\mu_j(0) = 1+ 4(j-1) \,,\quad \lambda_j^D(0) = 3 + 4(j-1)\,,
\end{equation}
where $\lambda_j^D(\xi)$ denotes the $j$-th eigenvalue of the
Dirichlet realization of $\mathfrak h(\xi)$ in $L^2(\mathbb R^+)$.
These identities follow upon noticing that the eigenfunctions of the harmonic 
oscillator on the entire line are respectively even or odd functions.

\subsection{Comparison Dirichlet-Neumann}
In this section we recall useful links between the Dirichlet spectrum
and the Neumann spectrum of the family $\mathfrak h(\xi)$ ($\xi \in
\mathbb R$) in $L^2(\mathbb R^+)$\,. By domain monotonicity, it is standard
that $\xi \mapsto \lambda_j^D(\xi)$ is monotonically
decreasing. By comparison of the form domains:
\begin{equation}
\mu_j(\xi) \leq \lambda_j^D(\xi)\,.
\end{equation}
Also,
\begin{gather*}
\lim_{\xi \to +\infty} \lambda_1^D(\xi)= \lim_{\xi \to +\infty}
\mu_1(\xi) = 1\,,\\
\lim_{\xi \to +\infty} \lambda_2^D(\xi)= \lim_{\xi \to +\infty}
\mu_2(\xi) = 3\,.
\end{gather*}
Using Sturm-Liouville theory, we also observe that, for
  any $j\geq 2$ and any  $\xi$, there exists $\xi'$ such that
\begin{equation}
\mu_j(\xi)= \lambda_{j-1}^D(\xi')\,.
\end{equation}
In particular,  using that
\begin{equation}
\inf _{\xi\in\mathbb{R}} \lambda_1^D(\xi)=1\,,
\end{equation}
we get 
\begin{equation}\label{minormu2}
\mu_2(\xi) > 1\,.
\end{equation}

\subsection{The virial theorem}
For $\ell >0$, the map $t \mapsto \ell t$ can be unitarily implemented on 
$L^2(\mathbb{R}^+)$ by the operator $U f(t) = \sqrt{\ell} f(\ell t)$. Therefore, 
$\mathfrak h(\xi)$ is isospectral to the (Neumann realization of the) operator
\begin{equation*}
  {\mathfrak k}_{\ell} := -\ell^{-2} \frac{d^2}{dt^2} + (\ell t- \xi)^2.
\end{equation*}
Since the eigenvalues are unchanged when $\ell$ varies we can take the
derivative at $\ell =1$ and find (using~\eqref{eq:1})
\begin{align*}
  0 &= \int_0^{+\infty} |u_j'(t;\xi)|^2\,dt 
     - \int_0^{+\infty} t (t - \xi) |u_j(t;\xi)|^2\,dt \\
    &= \int_0^{+\infty} |u_j'(t;\xi)|^2\,dt 
     - \int_0^{+\infty} (t - \xi)^2 |u_j(t;\xi)|^2\,dt 
     + \frac{\xi}{2} \mu_j'(\xi).
\end{align*}
Combined with the definition of the energy
\begin{align*}
  \mu_j(\xi) = \int_0^{+\infty} |u_j'(t;\xi)|^2\,dt
             + \int_0^{+\infty} (t- \xi)^2 |u_j(t;\xi)|^2\,dt\,,
\end{align*}
we get
\begin{align}
  \label{eq:2a}
  \int_0^{+\infty} |u_j'(t;\xi)|^2\,dt &= \frac{\mu_j(\xi)}{2}  - \frac{\xi
    \mu_j'(\xi)}{4}\,,
\end{align}
and
\begin{align}\label{eq:2b}
\int_0^{+\infty} (t- \xi)^2 |u_j(t;\xi)|^2\,dt 
&= \frac{\mu_j(\xi)}{2} + \frac{\xi \mu_j'(\xi)}{4}\,.
\end{align}

\subsection{Lower bounds on $\mu_j(\xi)$}

\subsubsection{Estimates on $\mu_1$}
As a warm-up, we recall the lower bound on $\mu_1 (\xi)$. Let $u_1(\,\cdot\,;\xi)$ 
be the ground state of ${\mathfrak h}(\xi)$. We use this function as a trial
state for ${\mathfrak h}(0)$ and find
\begin{align}
1 = \inf {\rm Spec}\, {\mathfrak h}(0) &< \langle u_1(\,\cdot\,;\xi), 
{\mathfrak h}(0) u_1(\,\cdot\,;\xi)\rangle 
=\mu_1(\xi) + 2 \xi \int_0^{+\infty} (t-\xi) u_1(t;\xi)^2 \,dt + \xi^2. \nonumber 
\end{align}
So we obtain the inequality~:
\begin{equation} \label{eq:13}
1 <  \mu_1(\xi) - \xi \mu_1'(\xi) + \xi^2\,. 
\end{equation}
We insert $\xi_0^{(1)}$, using $(\xi_0^{(1)})^2 = \Theta_0 =
\min_{\xi} \mu_1(\xi)$, $\mu_1'(\xi_0^{(1)})=0$ and get
\begin{align}
  \label{eq:14}
  \frac{1}{2} < \Theta_0\,.
\end{align}

\subsubsection{Estimates on $\mu_j$, $j>1$}
From~\eqref{eq:5},~\eqref{eq:6} and the fact that 
$\lim_{\xi\to+\infty}\mu_j(\xi)=(2j-1)$ we find that 
\[
0<\xi^{(j)}_0<\sqrt{2j-1}.
\]
The function $\xi\mapsto \mu_j(\xi)$ decreases from its value $\mu_j(0)=4j-3$  
until it arrives at its minimum at $\xi^{(j)}_0$, after which it becomes 
increasing, so there exists a unique point $\widehat{\xi}_j > 0$ 
such that $\mu_j(\widehat{\xi}_j)=2j-1$. By comparison with the harmonic 
oscillator on a half axis it can be seen that $\widehat{\xi}_j$ coincides with
the smallest value of $\xi$ for which $h_j'(\xi)=0$, where $h_j'(\xi)$ denotes 
the $j$th Hermite function. In particular one easily finds that
\begin{equation}\label{eq:xihat}
\widehat{\xi}_2 = 1,\quad \text{and}\quad 
\widehat{\xi}_3 = \sqrt{5/2}.
\end{equation}
To get the behavior of $\widehat{\xi}_j$ as $j\to\infty$ we observe by 
reflection that $-\widehat{\xi}_j$ is given by the value of $\xi$ 
for which $\mu_1(\xi)=2j-1$. 

Let us get an upper bound on $\mu_1(\xi)$ for $\xi$ negative.
For any $\gamma>0$ and any $\xi\in\mathbb{R}$ we use the inequality
\[
(t-\xi)^2 \leq (1+\gamma)t^2 
+ (1+1/\gamma)\xi^2
\]
to obtain the quadratic form comparison (here and below 
$\int_0^{+\infty} |u|^2\,dt =1$)
\[
\int_0^{+\infty} |u'|^2 + (t-\xi)^2|u|^2\,dt \leq 
\int_0^{+\infty} |u'|^2 + (1+\gamma)t^2|u|^2\,dt 
+ (1+1/\gamma)\xi^2.
\]
Comparing the first eigenvalue $\mu(\xi)$ with the first eigenvalue of the 
(scaled) harmonic oscillator, we find
\[
\mu_1(\xi)\leq \sqrt{1+\gamma}+(1+1/\gamma)\xi^2.
\]
The upper bound we get from this seems to be poor.

For any $\gamma>0$ and any $\xi\in\mathbb{R}$ we use the inequality
\[
(t-\widehat{\xi}_j)^2 \leq (1+\gamma)(t-\xi)^2 
+ (1+1/\gamma)(\widehat{\xi}_j-\xi)^2
\]
to obtain the quadratic form comparison
\[
\int_0^{+\infty} |u'|^2 + (t-\widehat{\xi}_j)^2|u|^2\,dt \leq 
\int_0^{+\infty} |u'|^2 + (1+\gamma)(t-\xi)^2|u|^2\,dt 
+ (1+1/\gamma)(\widehat{\xi}_j-\xi)^2.
\]
By scaling and change of function, we have that the quadratic form on the 
right-hand side is unitary equivalent to
\[
\sqrt{1+\gamma}\int_0^{+\infty} |u'|^2 + (t-(1+\gamma)^{1/4}\xi)^2|u|^2\,dt
+(1+1/\gamma)(\widehat{\xi}_j-\xi)^2.
\]
In particular, with the choice $\xi=\xi^{(j)}_0(1+\gamma)^{-1/4}$ we obtain, 
comparing the $j$th eigenvalue of the corresponding operators and 
using~\eqref{eq:5}, that
\begin{align*}
2j-1 =\mu_j(\widehat{\xi}_j) &\leq \sqrt{1+\gamma}\mu_j(\xi^{(j)}_0)+
(1+1/\gamma)\bigl(\widehat{\xi}_j-\xi^{(j)}_0(1+\gamma)^{-1/4}\bigr)^2\\
& = \sqrt{1+\gamma}\bigl(\xi^{(j)}_0\bigr)^2+
(1+1/\gamma)\bigl(\widehat{\xi}_j-\xi^{(j)}_0(1+\gamma)^{-1/4}\bigr)^2.
\end{align*}
Now let $j=2$. By~\eqref{eq:xihat} we have
\[
3 \leq \sqrt{1+\gamma}\bigl(\xi^{(2)}_0\bigr)^2+
(1+1/\gamma)\bigl(\xi^{(2)}_0(1+\gamma)^{-1/4}-1\bigr)^2.
\]
Completing the square, we get
\[
\bigl(\xi^{(2)}_0-(1+\gamma)^{-3/4}\bigr)^2 
\geq \frac{2\gamma}{(1+\gamma)^{3/2}},
\]
and hence the inequality
\begin{equation}\label{eq:xitwogamma}
\xi^{(2)}_0 > \frac{1+\sqrt{2\gamma}}{(1+\gamma)^{3/4}}
\end{equation}
(since $\frac{1-\sqrt{2\gamma}}{(1+\gamma)^{3/4}} < 1$ for all $\gamma>0$. 
Indeed, the function 
$\gamma\mapsto \frac{1-\sqrt{2\gamma}}{(1+\gamma)^{3/4}}$ starts at $1$ 
for $\gamma=0$ and then decreases to its minimal value $-1/\sqrt{3}$ for 
$\gamma=8$ after which it increases to $0$ as $\gamma\to\infty$).
Optimizing~\eqref{eq:xitwogamma} in $\gamma>0$ we find that the maximal value 
is attained for $\gamma=1/2$, for which we have
\[
\xi^{(2)}_0 > \frac{2^{7/4}}{3^{3/4}}\approx 1.48.
\]
The corresponding lower bound for $\mu_2$ is
\begin{equation}\label{eq:mutwoopt}
\mu_2\bigl(\xi^{(2)}_0\bigr) \geq \frac{2^{7/2}}{3^{3/2}}\approx 2.18.
\end{equation}

Continuing with $j=3$, we arrive at the inequality
\[
5 \leq \sqrt{1+\gamma}\bigl(\xi^{(3)}_0\bigr)^2+
(1+1/\gamma)\bigl(\xi^{(3)}_0(1+\gamma)^{-1/4}-\sqrt{5/2}\bigr)^2.
\]
The same type of calculation shows that
\[
\xi^{(3)}_0 > \sqrt{\frac{5}{2}}\frac{1+\sqrt{\gamma}}{(1+\gamma)^{3/4}}.
\]
Optimizing over $\gamma>0$ yields 
$\gamma=\frac{1}{2}\bigl(13-3\sqrt{17}\bigr)\approx 0.32$ with corresponding 
inequality
\[
\xi^{(3)}_0 
>\frac{\sqrt{5}\Bigl(2+\sqrt{26-6\sqrt{17}}\Bigr)}{(30-6\sqrt{17})^{3/4}}
\approx 2.01
\]
which in turn gives
\[
\mu_3(\xi^{(3)}_0)
\geq \frac{5\Bigl(2+\sqrt{26-6\sqrt{17}}\Bigr)^2}{(30-6\sqrt{17})^{3/2}}
\approx 4.04.
\]

\begin{remark}
We can compare these estimates with the numerical values
\[
\xi^{(2)}_0\approx 1.62,\quad \mu_2(\xi^{(2)}_0)\approx 2.64,\quad
\xi^{(3)}_0\approx 2.16,\quad \text{and}\quad \mu_3(\xi^{(3)}_0)\approx 4.65.
\]
\end{remark}

\subsection{Asymptotics of $u_1$}
We end this section by giving an asymptotic formula for
$u_1(\,\cdot\,;\xi)$ for large $\xi$.

\begin{proposition}\label{prop:Asympu}
For all $\alpha<1$ there exist $C_{\alpha}>0$ and
$\Xi_0>0$ such that
\begin{align}
  \label{eq:62}
  \Bigl|u_1(t,\xi) - \frac{1}{\sqrt{\pi}} \exp\bigl[
  -(t-\xi)^2/2\bigr]\Bigr| \leq C_{\alpha} \exp(-\alpha \xi^2/2),
\end{align}
for all $t \in {\mathbb R}^+$ and all $\xi>\Xi_0$.
\end{proposition}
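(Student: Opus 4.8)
The plan is to compare $u_1(\cdot;\xi)$ with the Gaussian $g_\xi(t) := \pi^{-1/2}\exp[-(t-\xi)^2/2]$, which is the $L^2({\mathbb R}^+)$-normalized (up to exponentially small corrections) ground state of the harmonic oscillator $-d^2/dt^2 + (t-\xi)^2$ on the \emph{whole line}. The function $g_\xi$ is an exact eigenfunction (eigenvalue $1$) of the differential expression, but it only \emph{approximately} satisfies the Neumann condition at $0$: $g_\xi'(0) = \xi\, g_\xi(0) = O(\xi e^{-\xi^2/2})$. So the strategy is a standard Agmon/resolvent perturbation argument: show $g_\xi$ is a good quasimode for ${\mathfrak h}(\xi)$, then use a spectral-gap estimate to conclude that the true eigenfunction is close to $g_\xi$.

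First I would record that $\|g_\xi\|_{L^2({\mathbb R}^+)} = 1 + O(e^{-\xi^2})$ and that $\mu_1(\xi) \to 1$, together with the gap $\mu_2(\xi) - \mu_1(\xi) \to 2$ (from the reminders in Section~\ref{sec:linear}, e.g.\ $\mu_2(\xi) > 1$ in~\eqref{minormu2} and $\mu_2(\xi)\to 3$), so for $\xi$ large there is a gap bounded below by, say, $1$. Second, I would compute the defect: writing $r_\xi := ({\mathfrak h}(\xi) - 1) g_\xi$ in the appropriate sense, the only contribution comes from the boundary term in the quadratic form, i.e.\ for $\phi$ in the form domain $\langle \phi, ({\mathfrak h}(\xi)-1) g_\xi\rangle = -\phi(0)\, g_\xi'(0)$ (integration by parts), and $|g_\xi'(0)| = \pi^{-1/2}\xi e^{-\xi^2/2}$. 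Combined with a trace/Sobolev bound $|\phi(0)| \le C\|\phi\|_{H^1}$ controlled by the form, this gives $\operatorname{dist}(1,\Spec{\mathfrak h}(\xi)) \le C\xi e^{-\xi^2/2}$ and, more importantly, via the standard spectral-theorem estimate $\|P^\perp g_\xi\| \le \|r_\xi\|/\operatorname{gap}$ (where $P^\perp$ projects off the $\mu_1$-eigenspace), that $\|g_\xi - \langle u_1, g_\xi\rangle u_1\|_{L^2} \le C\xi e^{-\xi^2/2}$; adjusting the sign of $u_1$ and using $\langle u_1, g_\xi\rangle = 1 + O(\xi^2 e^{-\xi^2})$ yields $\|u_1(\cdot;\xi) - g_\xi\|_{L^2({\mathbb R}^+)} \le C\xi e^{-\xi^2/2}$, hence also a bound on $\|u_1 - g_\xi\|_{H^1}$ by re-inserting into the quadratic form.

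Third — and this is where the pointwise statement and the loss from $e^{-\xi^2/2}$ to $e^{-\alpha\xi^2/2}$ enter — I would upgrade the $L^2$ (or $H^1$) bound to an $L^\infty$ bound in $t$. Here the clean route is an Agmon-type weighted estimate: the difference $w_\xi := u_1(\cdot;\xi) - g_\xi$ satisfies $-w_\xi'' + (t-\xi)^2 w_\xi = (\mu_1(\xi)-1)u_1 + (1-\text{error}) =: S_\xi$ with Neumann data $w_\xi'(0) = -g_\xi'(0)$, so multiplying by $e^{2\Phi}w_\xi$ with a Lipschitz weight $\Phi$ of slope close to $|t-\xi|$ and integrating by parts controls $\|e^{\Phi} w_\xi\|$; an $H^1$-to-$L^\infty$ Sobolev embedding then converts this into the pointwise bound $|w_\xi(t)| \le C e^{-\Phi(t)}\cdot(\text{the }L^2\text{ bound})$. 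The necessity of taking $\alpha < 1$ rather than $\alpha = 1$ is exactly the usual cost of this Agmon trade-off (the Gaussian itself decays like $e^{-(t-\xi)^2/2}$, so one cannot afford a weight with the full slope and simultaneously absorb the quadratic potential). I would choose $\Phi(t)$ so that $e^{-\Phi(\xi)}$ times the $L^2$ error is $\lesssim e^{-\alpha\xi^2/2}$, which forces the $\alpha<1$ restriction and fixes $C_\alpha, \Xi_0$.

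The main obstacle is the third step: getting a \emph{uniform in $t$} estimate with the stated exponential rate, rather than just in $L^2$ or on compact $t$-intervals. The quasimode/gap argument in steps one and two is routine once the boundary defect is identified. The delicate point is organizing the weighted (Agmon) estimate so that (a) the boundary term at $t=0$ produced by the integration by parts has a favorable sign or is dominated (it is, since $w_\xi(0)w_\xi'(0) = -w_\xi(0)g_\xi'(0)$ is exponentially small), (b) the source term $S_\xi$, which contains the factor $\mu_1(\xi)-1 = O(\xi e^{-\xi^2})$ from~\eqref{eq:muonebohe} as well as the quadratic-potential mismatch, is controlled with the same weight, and (c) the final $\alpha$ is genuinely allowed up to (but not including) $1$. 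A secondary technical nuisance is justifying the integrations by parts and the $H^1\hookrightarrow L^\infty$ step on the half-line with Neumann data, but these are standard for eigenfunctions of ${\mathfrak h}(\xi)$.
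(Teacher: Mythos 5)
Your overall strategy---compare $u_1(\cdot;\xi)$ with the whole-line Gaussian, use a spectral gap to get an $L^2$ estimate, then upgrade to a pointwise bound---is the same as the paper's, but there is a technical gap in the middle step, and the final step is more complicated than it needs to be.

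The gap is domain membership. You take $g_\xi(t)=\pi^{-1/2}e^{-(t-\xi)^2/2}$ itself as the quasimode, note that it fails the Neumann condition by $g_\xi'(0)=O(\xi e^{-\xi^2/2})$, and then invoke ``the standard spectral-theorem estimate $\|P^\perp g_\xi\|\le\|r_\xi\|/\mathrm{gap}$''. But $g_\xi\notin D(\mathfrak h(\xi))$, so $r_\xi=(\mathfrak h(\xi)-1)g_\xi$ is not an $L^2$ function; it only makes sense as an $H^{-1}$-type functional via the form identity $\langle\phi,(\mathfrak h(\xi)-1)g_\xi\rangle=-\phi(0)g_\xi'(0)$ you wrote down. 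The usual projection estimate simply does not apply as stated; one has to redo the argument in the form domain (using $q(g^\perp,g^\perp)\ge\mu_2\|g^\perp\|^2$ and a trace bound on $g^\perp(0)$), which you flag but do not carry out. The paper sidesteps this entirely by taking $\tilde u = \phi\cdot g_\xi$ with a smooth cutoff $\phi$ vanishing near $t=0$: then $\tilde u\in D(\mathfrak h(\xi))$, $(\mathfrak h(\xi)-1)\tilde u$ is an honest $L^2$ function supported in $[1,2]$ with $\|(\mathfrak h(\xi)-1)\tilde u\|^2\le C\xi^2 e^{-(\xi-2)^2}$, and the usual quasimode/gap argument applies directly. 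This is the idea your proposal is missing, and it also clarifies the origin of the $\alpha<1$ loss: it is not an ``Agmon trade-off'' but simply the cost of absorbing the $e^{-(\xi-2)^2}$ (cutoff near $t\in[1,2]$) and the polynomial prefactor $\xi^2$ into $e^{-\alpha\xi^2}$.

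For the pointwise upgrade you propose a weighted Agmon estimate, which would give a stronger, $t$-dependent bound but is not needed for \eqref{eq:62}: once one has the $L^2$ bound, plugging the difference back into the quadratic form yields the matching $W^{1,2}$ bound, and the Sobolev embedding $W^{1,2}(\mathbb R^+)\hookrightarrow L^\infty(\mathbb R^+)$ gives the uniform-in-$t$ estimate directly (the discrepancy between $\tilde u$ and the uncut Gaussian on $[0,2]$ is $O(e^{-(\xi-2)^2/2})$ and is absorbed in the same way). This is the route the paper takes, and it is both simpler and sufficient.
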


\begin{proof}
Let $\phi$ be smooth, $\phi(t) = 0$ for $t\leq 1$, $\phi(t) = 1$ for
$t\geq 2$ and define
\begin{align}
  \label{eq:63}
  \tilde u(t) = \phi(t) \frac{1}{\sqrt{\pi}} \exp\big[
  -(t-\xi)^2/2\big].
\end{align}
An elementary calculation now yields (for $\xi>2$ and some constant $C>0$)
\begin{align}
  \label{eq:64}
  \bigl\| [{\mathfrak h}(\xi) - 1 ]\tilde u \bigr\|^2 
\leq C \xi^2 \exp\bigl(-(\xi-2)^2\bigr),
\end{align}
Using the lower bound on $\mu_2(\xi)$ and the spectral theorem this
implies that
\begin{align}
  \label{eq:70}
  |\mu_1(\xi) -1| \leq C \exp(-\alpha \xi^2/2),
\end{align}
and the existence of a (possibly non-normalized) ground state
eigenfunction $u_1$ such that
\begin{align}
  \label{eq:71}
  \| \tilde u -  u_1 \|_2 \leq C \exp(-\alpha \xi^2/2).
\end{align}
One now obtains the similar estimate in $W^{1,2}(\mathbb{R}^+)$, from which the
pointwise estimate follows.
\end{proof}

\section{Estimates on the non-linear problem}\label{sec:nonlinear}
We now analyse the functional ${\mathcal F}_{z,\lambda}$ defined 
in~\eqref{eq:44}.
\subsection{Preliminaries}

We introduce the notation
\begin{align}\label{eq:2}
  {\mathfrak I}(\lambda):= \{ \xi \in {\mathbb R} \,:\, \mu_1(\xi) < \lambda \}.
\end{align}
For future reference, we notice that if $\Theta_0 < \lambda <1$, then
there exist $\xi_1(\lambda), \xi_2(\lambda)>0$ such that
\begin{align}\label{eq:7}
  {\mathfrak I}(\lambda) = \bigl]\xi_1(\lambda), \xi_2(\lambda)\bigr[.
\end{align}
For $\lambda =1$ we have $ {\mathfrak I}(\lambda)=\left[0,\infty\right[$.

\begin{theorem}\label{thm:Sammenkog}~
\begin{itemize}
\item For all $z \in {\mathbb R}, \lambda >0,$ the functional 
${\mathcal F}_{z,\lambda}$ admits a non-negative minimizer 
$f_{z,\lambda} \in {\mathcal B}^1({\mathbb R}^{+})$, which is non-trivial if 
and only if $\lambda > \mu_1(z)$. The minimizer $f_{z,\lambda}$ is a solution 
to the Euler-Lagrange equation~\eqref{eq:9b} and satisfies the bound
\begin{equation}\label{eq:fone}
\|f_{z,\lambda}\|_\infty \leq 1\,.
\end{equation}
Furthermore, minimizers are unique up to multiplication by a constant 
$c \in {\mathbb S}^1 \subset {\mathbb C}$.
\item For all $\epsilon\in \left]0,1/2\right[$, $\lambda >0$ and $z\in 
{\mathfrak I}(\lambda)$, there exist constants $c_{\epsilon}, C_{\epsilon}>0$
  such that
  \begin{align}
    \label{eq:46}
    c_{\epsilon} \exp\Big( -\Big[\frac{1}{2}+\epsilon\Big](t-z)^2\Big) 
\leq f_{z,\lambda}(t) 
\leq C_{\epsilon} \exp\Big( -\Big[\frac{1}{2}-\epsilon\Big](t-z)^2\Big).
  \end{align}
\end{itemize}
\end{theorem}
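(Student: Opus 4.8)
The plan is to prove Theorem~\ref{thm:Sammenkog} in two stages: first establish existence, the Euler--Lagrange equation, the $L^\infty$ bound, and uniqueness up to a phase; then, once we know the minimizer solves~\eqref{eq:9b}, bootstrap the Euler--Lagrange equation to get the Gaussian upper and lower bounds~\eqref{eq:46}.

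For the first item, I would proceed as follows. To show ${\mathcal F}_{z,\lambda}$ is bounded below and attains its infimum, note that the quartic term dominates the negative term $-\lambda|\phi|^2$ pointwise once $|\phi|$ is large, so ${\mathcal F}_{z,\lambda}(\phi)\ge \int_0^\infty |\phi'|^2 + (t-z)^2|\phi|^2\,dt - C_\lambda$ for a constant depending only on $\lambda$; this gives coercivity on ${\mathcal B}^1({\mathbb R}^+)$ and, by standard weak lower semicontinuity (the quadratic part is convex and the quartic term passes to the limit by Fatou after extracting an a.e.-convergent subsequence, using compact embedding of ${\mathcal B}^1$ into $L^2_{\mathrm{loc}}$ plus the confining potential to control the tails), a minimizer exists. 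Replacing $\phi$ by $|\phi|$ does not increase the energy, so we may take $f_{z,\lambda}\ge 0$. Criticality gives the Euler--Lagrange equation~\eqref{eq:9b} with Neumann condition $u'(0)=0$; elliptic regularity makes $f_{z,\lambda}$ smooth. For the bound $\|f_{z,\lambda}\|_\infty\le 1$: if the nonnegative solution attained a value $>1$ at an interior maximum $t_*$ then $f''(t_*)\le 0$, but~\eqref{eq:9b} gives $f''(t_*) = (t_*-z)^2 f(t_*) + \lambda f(t_*)^3 - \lambda f(t_*) = (t_*-z)^2 f(t_*) + \lambda f(t_*)(f(t_*)^2-1) \ge 0$, with strict inequality unless $t_*=z$ and $f(t_*)=1$ --- a maximum-principle argument (or truncation: test the energy with $\min(f,1)$ and check it strictly decreases the energy if $\{f>1\}$ has positive measure) rules this out; one must also handle the boundary point $t=0$ using $f'(0)=0$ and the behavior as $t\to\infty$ where $f\to 0$. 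Nontriviality precisely when $\lambda>\mu_1(z)$ follows by comparing the second variation at $\phi\equiv 0$ with the linear operator ${\mathfrak h}(z)$: the zero solution is a local (indeed the global, by convexity considerations) minimizer iff $\lambda\le\mu_1(z)$, while for $\lambda>\mu_1(z)$ a small multiple of the ground state $u_1(\cdot;z)$ lowers the energy. Uniqueness up to a constant phase is the delicate point of the first item; the standard route is that any minimizer may be taken real and nonnegative, and then two nonnegative solutions of the scalar ODE~\eqref{eq:9b} with $u'(0)=0$ that both lie in ${\mathcal B}^1$ must coincide --- this uses that the map $s\mapsto \lambda s^3$ makes the nonlinearity monotone so that the difference of two solutions satisfies a linear ODE with a nonnegative zeroth-order coefficient, forcing it to vanish (a standard convexity/ground-state-uniqueness argument for such one-dimensional Gross--Pitaevskii-type equations).

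For the second item, assume $z\in{\mathfrak I}(\lambda)$ so $f:=f_{z,\lambda}$ is nontrivial. The strategy is a comparison-principle (barrier) argument on the equation $-f'' + (t-z)^2 f = \lambda f - \lambda f^3 \le \lambda f$. Fix $\epsilon\in]0,1/2[$. For the upper bound, the comparison function $w_+(t) = C_\epsilon\exp(-[\tfrac12-\epsilon](t-z)^2)$ satisfies $-w_+'' + (t-z)^2 w_+ = [ (1-(1-2\epsilon)^2)(t-z)^2 - (1-2\epsilon) ] w_+$, whose coefficient exceeds $\lambda$ for $|t-z|$ large enough, say $t-z \ge R_\epsilon$; since $f$ is bounded (by the first item) and decays --- or at least stays $\le 1$ --- we can choose $C_\epsilon$ so that $w_+\ge f$ on $\{|t-z|\le R_\epsilon\}$ and on the Neumann boundary, and then the maximum principle applied to $f - w_+$ on the region where the potential coefficient dominates gives $f\le w_+$ everywhere (taking $C_\epsilon$ large also covers the bounded region trivially). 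For the lower bound one runs the symmetric argument with $w_-(t)=c_\epsilon\exp(-[\tfrac12+\epsilon](t-z)^2)$: its coefficient $[(1-(1+2\epsilon)^2)(t-z)^2 - (1+2\epsilon)]w_-$ is \emph{less} than $\lambda f^{-1}\cdot(\lambda f - \lambda f^3) = \lambda - \lambda f^2 \ge \lambda - \lambda \|f\|_\infty^2 \cdot \mathbf{1}$, and more carefully $-f''+(t-z)^2 f \ge \lambda(1-\|f\|_\infty^2) f$ is not quite enough, so instead use that on the set $\{|t-z|\le R\}$, $f\ge c>0$ by Harnack/strict positivity (the nonnegative nontrivial solution of~\eqref{eq:9b} cannot vanish, again by the ODE), and then $w_-\le f$ on that set for $c_\epsilon$ small, while in the outer region the comparison $-(f-w_-)'' + (t-z)^2(f-w_-) \ge \lambda f - \lambda f^3 - (\text{coeff})w_- \ge 0$ once $|t-z|$ is large and $f$ is small, so $f - w_-\ge 0$ propagates outward. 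The main obstacle --- and where I would spend the most care --- is the uniqueness statement in the first item: unlike existence and the $L^\infty$ bound, which are soft, ruling out a second nonnegative minimizer requires genuinely using the structure of the one-dimensional equation, and one must be careful that the argument is not circular with the Gaussian bounds (it is not: decay in ${\mathcal B}^1$ is enough). The barrier constructions in the second item are routine once one has the equation and the $L^\infty$ bound, the only subtlety being the bookkeeping of the constants $R_\epsilon, c_\epsilon, C_\epsilon$ and treating the Neumann point $t=0$ (which, since $z>0$ when $z\in{\mathfrak I}(\lambda)$ for $\lambda<1$, lies in the interior region $\{|t-z|\le R_\epsilon\}$ for $R_\epsilon$ large and causes no trouble).
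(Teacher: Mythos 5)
Your treatment of existence, the Euler--Lagrange equation, the $L^\infty$ bound, and nontriviality matches the paper's, which sketches these briefly and defers details to~\cite{fohebook}. Likewise, the barrier argument for the Gaussian bounds in the second item is exactly the paper's strategy: one takes $g(t)=C\exp\bigl(-\tfrac{\alpha}{2}(t-z)^2\bigr)$ with $\alpha<1$, checks the pointwise inequality $0<\alpha^2(t-z)^2-\alpha\le(t-z)^2+\lambda f_{z,\lambda}^2-\lambda$ for $t\ge T$, and gets a maximum-principle contradiction at a hypothetical interior positive maximum of $f_{z,\lambda}-g$; the paper proves the upper bound and notes the lower bound is symmetric, just as you propose. (You have a harmless sign slip: $-w_+''+(t-z)^2w_+=[(1-\alpha^2)(t-z)^2+\alpha]w_+$, not $-\alpha$; the structure of the argument is unchanged.)

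The genuine gap is in the uniqueness argument. First, the mechanism you sketch fails: if $u_1,u_2$ are two nonnegative solutions of~\eqref{eq:9b}, then $w=u_1-u_2$ solves $-w''+\bigl[(t-z)^2-\lambda+\lambda(u_1^2+u_1u_2+u_2^2)\bigr]w=0$, and the zeroth-order coefficient is \emph{not} nonnegative near $t=z$, where it is close to $\lambda\bigl(u_1^2+u_1u_2+u_2^2-1\bigr)$; the bound $\|u_i\|_\infty\le1$ gives no \emph{lower} bound on the $u_i$, so monotonicity of $s\mapsto s^3$ alone does not force $w\equiv0$. Second, ``replacing $\phi$ by $|\phi|$'' only exhibits one nonnegative minimizer; it does not show that \emph{every} minimizer is a unimodular multiple of a nonnegative one, which is what the claim ``unique up to $c\in\mathbb{S}^1$'' requires. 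The paper closes that gap by noting that for a complex minimizer $u$ with $f:=|u|$, both $u$ and $f$ are $L^2$-eigenfunctions of the same linear half-line operator $-\tfrac{d^2}{dt^2}+(t-z)^2+\lambda f^2$ (Neumann) with the same eigenvalue $\lambda$, so simplicity of half-line Sturm--Liouville eigenvalues (Cauchy uniqueness) forces $u=cf$ with $|c|=1$; uniqueness of the nonnegative minimizer is then taken from~\cite[Proposition 14.2.2]{fohebook}, where the argument rests on convexity of the functional in the variable $v=u^2$ rather than on a sign condition in the linearized ODE. To close your proof you would need to replace the linearized-ODE claim by such a convexity argument and add the phase-reduction step.
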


\begin{proof}
The first item in
Theorem~\ref{thm:Sammenkog} is a slight improvement of known results 
(see~\cite[Proposition~14.2.1 and~14.2.2]{fohebook}), so we will only give 
brief indications of proof. For given $z$ and $\lambda$ the functional is 
clearly bounded from below, so the existence of minimizers is standard. Also, 
by differentiation of the absolute value, we see that minimizers can be chosen 
non-negative. The proof of the non-triviality statement is also 
straight-forward. The equation~\eqref{eq:9b} follows by variation around a 
minimum, and~\eqref{eq:fone} is a consequence of the maximum principle applied 
to~\eqref{eq:9b}.

We finally consider the uniqueness question.
Let $u$ be a minimizer and let $f = |u|$. By the Euler-Lagrange 
equation~\eqref{eq:9b} we see that 
\begin{align}
{\mathfrak k}_{\lambda}(z ) f = \lambda f,\qquad {\mathfrak k}_{\lambda}(z ) u 
= \lambda u.
\end{align}
By Cauchy uniqueness, we therefore have $u = c f$ for some $c \in
{\mathbb S}^1$. Therefore, to prove uniqueness it suffices to prove
uniqueness of non-negative minimizers. The proof of this (which does
not use any bound on the value of $\lambda$) is given in the proof of
\cite[Proposition~14.2.2]{fohebook} and will not be repeated.

The upper and lower bounds in~\eqref{eq:46} can both be proved using the 
following strategy, so we only consider the upper bound.
We start from the equation for $f_{z,\lambda}$ in the form
\begin{equation}
  \label{eq:56}
f_{z,\lambda}''(t) 
= [ (t-z)^2 + \lambda f_{z,\lambda}^2(t) - \lambda] f_{z,\lambda}(t).
\end{equation}
Define, for $\alpha <1$, the function $g$ as $g(t) = C
\exp(-\frac{\alpha}{2}(t-z)^2)$, for some constant $C>0$. Then
\begin{equation}
  \label{eq:57}
  g''(t)= [\alpha^2 (t-z)^2 - \alpha] g(t).
\end{equation}
Choose $T>z$ so large that 
\begin{equation}
  \label{eq:58}
  0<[\alpha^2 (t-z)^2 - \alpha] \leq [ (t-z)^2 + \lambda
  f_{z,\lambda}^2(t) - \lambda] ,
\end{equation}
for all $t \geq T$. This is possible since $\alpha <1$. Choose $C>0$ in such a 
way that 
\begin{align}
  \label{eq:59}
  g(T) > f_{z,\lambda}(T).
\end{align}
Suppose that the inequality $g(t) \geq f_{z,\lambda}(t)$ fails for
some $t > T$. Since both functions tend to $0$ at $+\infty$ (at least
along some sequence, since $f \in L^2({\mathbb R}^+)$), we deduce
that $u:= f- g$ has a positive maximum at some point $t_0>T$. Thus
$u''(t_0) \leq 0$. But, for $t \geq T$, we have
\begin{align}
  \label{eq:60}
  u''(t) &= [ (t-z)^2 + \lambda
  f_{z,\lambda}^2(t) - \lambda] f_{z,\lambda}(t)- [\alpha^2 (t-z)^2 - \alpha] 
g(t) \nonumber \\
&\geq
[\alpha^2 (t-z)^2 - \alpha] u(t).
\end{align}
At $t_0$ this is strictly positive and we get a contradiction.
\end{proof}

By a continuity argument, we find

\begin{proposition}
For $0 < \lambda \leq 1$, the function 
\begin{align}
  \label{eq:45}
  {\mathbb R} \ni z \mapsto {\mathcal F}_{z,\lambda}(f_{z,\lambda})
\end{align}
admits a minimum $\zeta(\lambda) >0$.
\end{proposition}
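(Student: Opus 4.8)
The plan is to establish the existence of a minimizer of $z \mapsto {\mathcal F}_{z,\lambda}(f_{z,\lambda})$ by combining coercivity at infinity with continuity on compact sets, then to verify that any minimum is strictly positive. First I would record the obvious upper bound: choosing $z \in {\mathfrak I}(\lambda)$ (which is nonempty for $\lambda > \Theta_0$; for $0 < \lambda \le \Theta_0$ the statement still needs handling---see below) makes $f_{z,\lambda}$ non-trivial by Theorem~\ref{thm:Sammenkog}, so $\inf_z {\mathcal F}_{z,\lambda}(f_{z,\lambda})$ is at most some value strictly below $0$. More carefully, for $\lambda \le \Theta_0$ one has $f_{z,\lambda} \equiv 0$ for all $z$ (since $\mu_1(z) \ge \Theta_0 \ge \lambda$ unless... in fact $\mu_1(z) > \Theta_0$ for $z \ne \xi_0$, and $\mu_1(\xi_0) = \Theta_0$), so the infimum is $0$, attained e.g. at $\zeta(\lambda) = \xi_0 > 0$; the interesting case is $\Theta_0 < \lambda \le 1$, where the infimum is negative. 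I will treat that case in what follows, the other being trivial.

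Next I would prove coercivity as $|z| \to \infty$, i.e. that ${\mathcal F}_{z,\lambda}(f_{z,\lambda}) \to 0$ as $|z| \to \infty$. For $z \to -\infty$ this is immediate: $\inf\Spec {\mathfrak h}(z) = \mu_1(z) \to +\infty$, so for $|z|$ large $\mu_1(z) > \lambda$ and $f_{z,\lambda} \equiv 0$, giving ${\mathcal F}_{z,\lambda}(f_{z,\lambda}) = 0$. For $z \to +\infty$ one uses $\mu_1(z) \to 1 \ge \lambda$, but when $\lambda = 1$ this is borderline, so a quantitative argument is needed: using the upper bound on $f_{z,\lambda}$ from~\eqref{eq:46} (with $\epsilon$ small), or directly testing the Euler--Lagrange equation~\eqref{eq:9b} against $f_{z,\lambda}$ to get
\begin{align*}
{\mathcal F}_{z,\lambda}(f_{z,\lambda}) = -\frac{\lambda}{2}\|f_{z,\lambda}\|_4^4,
\end{align*}
one reduces matters to showing $\|f_{z,\lambda}\|_4^4 \to 0$ as $z \to +\infty$. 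The pointwise bound $\|f_{z,\lambda}\|_\infty \le 1$ together with the Gaussian decay $f_{z,\lambda}(t) \le C_\epsilon \exp(-(\tfrac12-\epsilon)(t-z)^2)$ for $z$ in the relevant range, plus the fact that for $z$ large the "effective potential well" $(t-z)^2$ forces mass far from the Neumann boundary where the $-\lambda|\phi|^2$ term cannot be exploited, gives $\|f_{z,\lambda}\|_2^2 \to 0$, hence $\|f_{z,\lambda}\|_4^4 \le \|f_{z,\lambda}\|_\infty^2 \|f_{z,\lambda}\|_2^2 \to 0$. (Concretely: comparing ${\mathfrak h}(z)$ restricted to functions supported near $t = z$ with the full-line harmonic oscillator shows the "gain" $\lambda - \mu_1(z)$ available to sustain a nontrivial minimizer shrinks, and standard a priori estimates on $\|f_{z,\lambda}\|_2$ in terms of $(\lambda - \mu_1(z))_+$ finish it.)

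Then I would invoke continuity: the map $z \mapsto {\mathcal F}_{z,\lambda}(f_{z,\lambda})$ is continuous on ${\mathbb R}$. This follows from uniqueness of the non-negative minimizer (Theorem~\ref{thm:Sammenkog}) plus a standard compactness/stability argument for the family of Euler--Lagrange solutions~\eqref{eq:9b} as $z$ varies---e.g. if $z_n \to z$, the minimizers $f_{z_n,\lambda}$ are bounded in ${\mathcal B}^1({\mathbb R}^+)$ with uniform Gaussian tails from~\eqref{eq:46}, so a subsequence converges strongly in $L^2 \cap L^4$ to a non-negative minimizer for $z$, which by uniqueness is $f_{z,\lambda}$, and the energies converge. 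Combining continuity with the decay at $\pm\infty$ and the fact that the infimum is strictly negative, the infimum over $z$ is attained at some point $\zeta(\lambda)$ lying in a compact set, and moreover any such minimizer must have $f_{\zeta(\lambda),\lambda}$ non-trivial, hence $\zeta(\lambda) \in {\mathfrak I}(\lambda) \subset \left]0,\infty\right[$ by~\eqref{eq:7}, so $\zeta(\lambda) > 0$.

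The main obstacle is the borderline behaviour at $\lambda = 1$ and $z \to +\infty$: since $\mu_1(z) \to 1$ from below, $f_{z,\lambda}$ stays non-trivial for all $z > 0$ when $\lambda = 1$, so one genuinely needs the quantitative rate at which $\|f_{z,\lambda}\|_2 \to 0$ rather than a soft argument; this is where the exponential smallness of $\lambda - \mu_1(z) = 1 - \mu_1(z)$ (cf.~\eqref{eq:muonebohe}) and the decay estimate~\eqref{eq:46} must be used together. The continuity of $z \mapsto {\mathcal F}_{z,\lambda}(f_{z,\lambda})$ is routine given uniqueness, and the strict negativity of the infimum is immediate from the first step, so these are not serious difficulties.
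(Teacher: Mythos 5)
Your proposal is correct and shares the same high-level skeleton as the paper's proof (energy $\to 0$ as $|z|\to\infty$, continuity in $z$, hence a minimum; strict negativity of the infimum forces $\zeta\in{\mathfrak I}(\lambda)\subset\,]0,\infty[$). The difference lies entirely in the one genuinely nontrivial step, $\lambda=1$, $z\to+\infty$. You attack it through the minimizer: the Euler--Lagrange identity ${\mathcal F}_{z,\lambda}(f_{z,\lambda})=-\tfrac{\lambda}{2}\|f_{z,\lambda}\|_4^4$, then $\lambda\|f_{z,\lambda}\|_4^4\le(\lambda-\mu_1(z))\|f_{z,\lambda}\|_2^2$, a uniform bound on $\|f_{z,\lambda}\|_2$ coming from the Gaussian decay~\eqref{eq:46} (or from $\|f\|_\infty\le1$ plus effective localization), and finally the rate $1-\mu_1(z)\to0$. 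This works, but it relies on the full machinery of Theorem~\ref{thm:Sammenkog} and on the spectral asymptotics of $\mu_1$. The paper instead bounds ${\mathcal F}_{z,1}(\phi)$ from below for an \emph{arbitrary} $\phi\in{\mathcal B}^1({\mathbb R}^+)$ by splitting $(t-z)^2=\alpha(t-z)^2+(1-\alpha)(t-z)^2$, using $(1-\alpha)$-worth of the quadratic form to produce $(1-\alpha)\mu_1(z)\|\phi\|_2^2$, restricting to the region where the resulting $|\phi|^2$-coefficient is negative, and completing the square pointwise in $|\phi|^2$; choosing $\alpha=1-\mu_1(z)$ then makes the bound go to $0$. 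That calculation needs only $\mu_1(z)\to1$, no decay estimates on $f_{z,\lambda}$, and no a priori norm bounds --- it is both shorter and logically lighter. One small polish to your write-up: your sentence about the Gaussian bound and the effective potential well gestures at, but does not quite pin down, the uniform $\|f_{z,\lambda}\|_2$ bound you need; it is cleanest to just read it off~\eqref{eq:46}, which gives $\|f_{z,\lambda}\|_2^2\le C_\epsilon^2\sqrt{\pi/(1-2\epsilon)}$ uniformly in $z$, and then combine with $\lambda\|f\|_4^4\le(\lambda-\mu_1(z))\|f\|_2^2$. Your treatment of $\lambda\le\Theta_0$ (trivial minimizer, $\zeta$ may be chosen as $\xi_0>0$) is fine and the paper simply does not bother to spell it out.
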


Notice that for $\lambda > 1$, the existence of a minimum is an open problem.
\begin{proof}
Only the case $\lambda =1$ needs some consideration. We will prove
that the minimal energy in that case tends to $0$ as $z \to
+\infty$. By continuity this implies the proposition. We calculate, for 
arbitrary $\phi \in {\mathcal B}^1({\mathbb R}^+)$ and 
$\alpha \in \left]0,1\right[$, and estimating (part of) the quadratic 
expression from below by the linear ground state energy
\begin{align}
  \label{eq:47}
  {\mathcal F}_{z,1}(\phi) &\geq \int_0^{+\infty} \bigl[\alpha (t -z)^2
  +(1-\alpha)\mu_1(z) - 1\bigr] |\phi|^2 + \frac{1}{2} |\phi|^4
  \,dt\nonumber \\
&\geq \int_{\bigl\{|t-z|\leq \sqrt{[1 -
    (1-\alpha)\mu_1(z)]/\alpha} \bigr\}} \bigl[(1-\alpha)\mu_1(z) - 1\bigr]
|\phi|^2 + \frac{1}{2} |\phi|^4\,dt\nonumber \\
&\geq
- \bigl[(1-\alpha)\mu_1(z) - 1\bigr]^2 \sqrt{\frac{1
  -(1-\alpha)\mu_1(z)}{\alpha}}\nonumber \\
&= - \bigl[1 -\mu_1(z)+\alpha\mu_1(z)\bigr]^2
\sqrt{\frac{1-\mu_1(z) + \alpha\mu_1(z)}{\alpha}
 }
,
\end{align}
where the last inequality follows by completing the square.
We choose $\alpha = \alpha(z) = 1-\mu_1(z) \rightarrow 0$ as $z
\rightarrow +\infty$ to get the conclusion.
\end{proof}

We can now prove~\eqref{eq:61}.
\begin{proof}[Proof of the second item in Theorem~\ref{thm:nuzeta}]
Let $z\in {\mathbb R}$ and let $f_{z,\lambda}$ be a
positive minimizer of ${\mathcal F}_{z,\lambda}$. Notice that $z$ and
$\lambda$ will be fixed in the remainder of the proof. We therefore
write $f$ instead of $f_{z,\lambda}$. We also denote by
$\tilde{\lambda}_j(\nu)=\lambda_j(\nu,z)$ the eigenvalues of the operator 
in~\eqref{eq:61b}.

We apply Temple's inequality (see~\cite{kato3}) with $u_1 := u_1(\,\cdot\,;\nu)$ 
as a test function. Under the condition that
$\tilde{\lambda}_2(\nu)>A$, Temple's inequality says that
\begin{equation}\label{eq:Temple1}
\tilde{\lambda}_1(\nu)\geq A - \frac{B}{\tilde{\lambda}_2(\nu)-A},
\end{equation}
where 
\[
A=\Big\langle u_1,\Big\{-\frac{d^2}{dt^2} + (t-\nu)^2 + \lambda f^2\Big\} u_1 
\Big\rangle
 = \mu_1(\nu) + \lambda \| f u_1 \|_2^2
\]
and 
\[
B   = \Big\|\Big\{-\frac{d^2}{dt^2} + (t-\nu)^2 + \lambda f^2\Big\}
u_1\Big\|_2^2  - A^2
= \lambda^2 \| f^2 u_1 \|_2^2 - \lambda^2 \| f u_1\|_2^4.
\]
Using the upper bound in~\eqref{eq:62} and~\eqref{eq:46}, $\| f u_1 \|_2
\rightarrow 0$ as $\nu \rightarrow \infty$. Since
$\tilde{\lambda}_2(\nu) \geq \mu_2(\nu)$ we see that the condition
$\tilde{\lambda}_2(\nu)>A$ is satisfied for large $\nu$'s, and there
\begin{align}
  \label{eq:65}
  \tilde{\lambda}_1(\nu)\geq \mu_1(\nu) + \lambda \| f u_1 \|_2^2 
- C \lambda^2 \| f^2 u_1 \|_2^2,
\end{align}
for some $C>0$ independent of $\nu$.

Using the upper bounds in~\eqref{eq:62} and~\eqref{eq:46}, we get for
all $0<\alpha < 1$, and large $\nu$,
\begin{align}
  \label{eq:66}
  \| f^2 u_1 \|_2^2 &\leq C \exp(-\alpha \nu^2) + C \int_{-\infty}^{+\infty}
  \exp(-2\alpha(t-z)^2)\exp(-\alpha(t-\nu)^2)\,dt \nonumber\\
 &\leq
C \exp(-\alpha \nu^2) + C'\exp(-2\alpha' \nu^2/3),
\end{align}
where $\alpha' < \alpha$ is arbitrary. 

Without striving for optimality, we make the simple estimate
\begin{align}
  \label{eq:67}
  \| f u_1 \|_2^2 \geq \int_{\nu/2 - 1}^{\nu/2+1} f^2 u_1^2\,dt.
\end{align}
In this interval of integration it follows from~\eqref{eq:62} that
$u_1^2 \geq C \exp\bigl(-(\nu/2 +1)^2\bigr)$ and from~\eqref{eq:46} that $f^2
\geq C \exp(-\beta \nu^2/4)$ for any $\beta >1$. Inserting in the
integral yields, for any $\beta'>1$,
\begin{align}
  \label{eq:68}
  \| f u_1 \|_2^2 \geq C \exp( - \beta' \nu^2/2).
\end{align}
Combining~\eqref{eq:65},~\eqref{eq:66},~\eqref{eq:68} and the
asymptotics of $\mu_1$ from~\eqref{eq:muonebohe} gives that
\begin{align}
  \label{eq:69}
  \tilde{\lambda}_1(\nu) > 1,
\end{align}
for large $\nu$, which is~\eqref{eq:61}.

To prove that $\tilde{\lambda}_1(\nu) \rightarrow 1$, we use the variational 
principle with $u_1=u_1(\,\cdot\,;\nu)$ as a test function. 
Notice that by the lower bound just established, we only need to prove an upper 
bound with limit $1$ at infinity. The variational principle gives
\begin{align}
\tilde{\lambda}_1(\nu) \leq \mu_1(\nu) + \lambda \| f u_1 \|_2^2.
\end{align}
Since we have seen above that $\| f u_1 \|_2 \rightarrow 0$ and 
$\mu_1(\nu) \rightarrow 1$ in the large $\nu$ limit, this implies the upper 
bound required. 
\end{proof}

\subsection{A virial-type result}
The function $f_{\zeta,\lambda}$ satisfies the Euler-Lagrange 
equation~\eqref{eq:9b}. Since, $\zeta = \zeta(\lambda)$ is a minimum for the 
non-linear energy, we get
\begin{align}
  \label{eq:36}
  \int_0^{+\infty} (t-\zeta) f_{\zeta,\lambda}^2 \,dt = 0.
\end{align}
In particular it holds that $\zeta(\lambda)>0$.

Moreover, multiplying~\eqref{eq:9b} by $f_{\zeta,\lambda}$ and integrating, 
we obtain
\begin{align}
  \label{eq:10b}
  \| f_{\zeta,\lambda}' \|_2^2 + \|(t- \zeta) f_{\zeta,\lambda} \|_2^2 
   + \lambda \| f_{\zeta,\lambda} \|_4^4 = \lambda \|f_{\zeta,\lambda} \|_2^2\,.
\end{align}

\begin{lemma}
Assume that $\Theta_0\leq\lambda\leq1$ and that $(\zeta,f_{\zeta,\lambda})$ is 
a minimizer of the functional~\eqref{eq:44}. Then
\begin{align}
\|f'_{\zeta(\lambda),\lambda}\|_2^2 
- \|(t - \zeta(\lambda))f_{\zeta(\lambda),\lambda}\|_2^2
 + \frac \lambda 4 \|f_{\zeta(\lambda),\lambda}\|_4^4 
 &=0\,,\label{eq:Hzero}\\
 2 \|f'_{\zeta(\lambda),\lambda}\|_2^2 
 + \frac{5\lambda}{4} \| f_{\zeta(\lambda),\lambda}\|_4^4
 &= \lambda \|f_{\zeta(\lambda),\lambda}\|_2^2\,,\label{eq:Hone}\\
\intertext{ and }
2 \| (t - \zeta(\lambda))f_{\zeta(\lambda),\lambda}\|_2^2
 + \frac {3\lambda}{ 4 }\| f_{\zeta(\lambda),\lambda}\|_4^4
 &= \lambda \|f_{\zeta(\lambda),\lambda}\|_2^2\,.\label{eq:Htwo}
\end{align}
\end{lemma}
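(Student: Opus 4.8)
The plan is to extract one more scaling identity and then read off all three relations by combining it with the already-derived energy identity~\eqref{eq:10b}. Throughout write $f=f_{\zeta(\lambda),\lambda}$ and $\zeta=\zeta(\lambda)$. Two ingredients are already in hand: first,~\eqref{eq:10b}, namely $\|f'\|_2^2+\|(t-\zeta)f\|_2^2+\lambda\|f\|_4^4=\lambda\|f\|_2^2$, obtained by multiplying the Euler--Lagrange equation~\eqref{eq:9b} by $f$ and integrating by parts over $\mathbb R^+$ (the boundary term at $0$ vanishes since $f'(0)=0$, and $f\in\mathcal B^1(\mathbb R^+)$ kills the term at infinity); second, the stationarity~\eqref{eq:36}, $\int_0^\infty(t-\zeta)f^2\,dt=0$, equivalently that the barycentre of $f^2$ equals $\zeta$.

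For the new identity I would exploit the $L^2$-preserving dilation $U_\ell f(t)=\sqrt{\ell}\,f(\ell t)$ already used in the virial subsection for the linear problem. A change of variables gives $\|(U_\ell f)'\|_2^2=\ell^2\|f'\|_2^2$, $\|U_\ell f\|_4^4=\ell\|f\|_4^4$, $\|U_\ell f\|_2^2=\|f\|_2^2$, and, for any $z\in\mathbb R$, $\int_0^\infty(t-z)^2|U_\ell f|^2\,dt=\ell^{-2}\int_0^\infty(s-\ell z)^2 f(s)^2\,ds$. Since $U_\ell f\in\mathcal B^1(\mathbb R^+)$, it is an admissible competitor in $\mathcal F_{z,\lambda}$ for every $z$. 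Minimizing the last integral over $z$ and invoking~\eqref{eq:36} shows that the optimal shift is $z=\zeta/\ell$, for which this integral is exactly $\|(t-\zeta)f\|_2^2$. Hence the function
\[
h(\ell):=\mathcal F_{\zeta/\ell,\lambda}(U_\ell f)=\ell^2\|f'\|_2^2+\ell^{-2}\|(t-\zeta)f\|_2^2+\tfrac{\lambda}{2}\,\ell\,\|f\|_4^4-\lambda\|f\|_2^2
\]
satisfies $h(\ell)\ge\mathcal F_{\zeta,\lambda}(f)=h(1)$ for all $\ell>0$, because $(\zeta,f)$ minimizes~\eqref{eq:44}. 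As $h$ is a smooth function of $\ell$ near $1$, we get $h'(1)=0$, i.e.\ $2\|f'\|_2^2-2\|(t-\zeta)f\|_2^2+\tfrac{\lambda}{2}\|f\|_4^4=0$; dividing by $2$ gives~\eqref{eq:Hzero}.

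Finally, adding~\eqref{eq:10b} and~\eqref{eq:Hzero} yields~\eqref{eq:Hone}, while subtracting~\eqref{eq:Hzero} from~\eqref{eq:10b} yields~\eqref{eq:Htwo}. The only point that goes beyond a routine dilation computation is the identification of the optimal shift $z=\zeta/\ell$, which is precisely where the first-order optimality~\eqref{eq:36} in the $z$-variable enters; I therefore do not expect a genuine obstacle here, only careful bookkeeping of the scaling exponents and of the fact that $h$ attains an interior minimum at $\ell=1$.
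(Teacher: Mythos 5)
Your proof is correct and follows essentially the same route as the paper: a scaling/virial argument exploiting the dilation $U_\ell$, differentiating the resulting one-parameter family of energies at $\ell=1$ to get~\eqref{eq:Hzero}, then combining with~\eqref{eq:10b} by addition and subtraction. The only cosmetic difference is that you let the shift travel as $z=\zeta/\ell$ (so~\eqref{eq:36} shows up in identifying the optimal $z$), whereas the paper keeps $z=\zeta$ fixed and uses~\eqref{eq:36} to simplify the cross term $\int t(t-\zeta)f^2\,dt$ arising when differentiating; the resulting identity is identical.
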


\begin{proof}
By a change of variable and of function in the functional 
${\mathcal F}_{z,\lambda}$ we get a rescaled functional
\[
\phi  \mapsto
\int_0^{+\infty} \rho^2 |\phi'(t)|^2 
 + \Bigl(\frac t \rho - \zeta\Bigr)^2 |\phi(t)|^2
 + \frac {\lambda \rho}{ 2 } |\phi(t)|^4 
 - \lambda |\phi(t)|^2 \,dt
\]
with same infimum. Expressing that the infimum is independent of $\rho$, we 
obtain (using~\eqref{eq:36}) at $\rho=1$ and $\zeta =\zeta(\lambda)$, the 
identity~\eqref{eq:Hzero}. Combining with~\eqref{eq:10b} we also 
get~\eqref{eq:Hone} and~\eqref{eq:Htwo}.
\end{proof}

\subsection{Different bounds on $f_{\zeta,\lambda}$}

\begin{proposition}\label{Prop:UnifNormsf}
Assume that $\Theta_0\leq\lambda\leq 1$ and let $(\zeta,f_{\zeta,\lambda})$ be 
a minimum of the function $(z,f) \mapsto {\mathcal F}_{z,\lambda}(f)$ with 
${\mathcal F}$ defined in~\eqref{eq:44}. Then
\begin{equation}\label{eq:fzero}
f_{\zeta,\lambda}(0)^2 = \frac{2}{\lambda}\bigl(\lambda-\zeta^2\bigr).
\end{equation}
Furthermore,
\begin{equation}\label{eq:41}
2(\lambda-\zeta^2) \leq \lambda \| f_{\zeta,\lambda} \|_{\infty}^2  
   \leq \frac{9}{2^{4/3}}\zeta^{2/3}\lambda^{1/3}
\biggl(\frac{1}{2}
-\frac{5(\lambda-\Theta_0)}{12\zeta^{1/2}
\lambda\|u_1(\,\cdot\,;\xi_0)\|_4^2}\biggr)^{1/3}
(\lambda-\mu_1(\zeta))
\end{equation}
and
\begin{equation}\label{eq:42}
\frac{\lambda-\Theta_0}{\|u_1(\,\cdot\,:\xi_0)\|_4^2} 
\leq \lambda \| f_{\zeta,\lambda} \|_4^2 
\leq \frac{3}{2} \zeta^{1/2} (\lambda - \mu_1(\zeta)).
\end{equation}
\end{proposition}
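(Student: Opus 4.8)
The plan is to prove the four assertions in the order: first~\eqref{eq:fzero}, then the two lower bounds, and finally the two upper bounds, which I will reduce to a single comparison between $\|f_{\zeta,\lambda}\|_{L^2}$ and $\|f_{\zeta,\lambda}\|_{L^4}$ that turns out to be the crux. Throughout write $f=f_{\zeta,\lambda}$, $\zeta=\zeta(\lambda)$; recall $f$ solves~\eqref{eq:9b} with $z=\zeta$ and satisfies~\eqref{eq:36} and the virial identities~\eqref{eq:Hzero}--\eqref{eq:Htwo}.

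To get~\eqref{eq:fzero} I multiply $-f''+(t-\zeta)^2 f+\lambda f^3=\lambda f$ by $f'$ and integrate over $\mathbb{R}^+$: the term $-\int f''f'=-\tfrac12[(f')^2]_0^\infty$ vanishes by $f'(0)=0$ and decay at infinity; $\int(t-\zeta)^2 ff'=\tfrac12\int(t-\zeta)^2(f^2)'$ integrates by parts to $-\tfrac12\zeta^2 f(0)^2-\int(t-\zeta)f^2$, the last integral being $0$ by~\eqref{eq:36}; the cubic and linear terms give $-\tfrac\lambda4 f(0)^4$ and $-\tfrac\lambda2 f(0)^2$. Non-triviality of $f$ (valid since $\lambda>\mu_1(\zeta)$) together with ODE uniqueness forces $f(0)>0$, and dividing by $f(0)^2$ gives~\eqref{eq:fzero}. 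Two immediate consequences: $\zeta^2\le\lambda\le 1$, hence $\zeta\le 1$; and $\|f\|_\infty^2\ge f(0)^2=\tfrac2\lambda(\lambda-\zeta^2)$, the lower bound in~\eqref{eq:41}. For the lower bound in~\eqref{eq:42}, from~\eqref{eq:10b} one has $\mathcal{F}_{\zeta,\lambda}(f)=-\tfrac\lambda2\|f\|_4^4$; since $(\zeta,f)$ minimizes $(z,\phi)\mapsto\mathcal{F}_{z,\lambda}(\phi)$ I compare with the trial pair $z=\xi_0$, $\phi=\epsilon u_1(\,\cdot\,;\xi_0)$, where $\mathcal{F}_{\xi_0,\lambda}(\epsilon u_1)=\epsilon^2(\Theta_0-\lambda)+\tfrac\lambda2\epsilon^4\|u_1(\,\cdot\,;\xi_0)\|_4^4$ has minimum $-\tfrac{(\lambda-\Theta_0)^2}{2\lambda\|u_1(\,\cdot\,;\xi_0)\|_4^4}$ over $\epsilon^2\ge0$; comparing yields $\lambda\|f\|_4^2\ge(\lambda-\Theta_0)/\|u_1(\,\cdot\,;\xi_0)\|_4^2$.

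For the two upper bounds the common reduction is: testing $\mathfrak{h}(\zeta)$ against $f$ and using~\eqref{eq:10b} gives $\mu_1(\zeta)\|f\|_2^2\le\langle\mathfrak{h}(\zeta)f,f\rangle=\lambda\|f\|_2^2-\lambda\|f\|_4^4$, i.e.\ $\lambda\|f\|_4^4\le(\lambda-\mu_1(\zeta))\|f\|_2^2$; moreover the one-dimensional Sobolev identity $f(t)^3=-3\int_t^{\infty}f^2 f'\,ds$ with Cauchy--Schwarz gives $\|f\|_\infty^6\le 9\|f\|_4^4\|f'\|_2^2$, while~\eqref{eq:Hone} gives $\|f'\|_2^2=\tfrac\lambda2\|f\|_2^2-\tfrac{5\lambda}8\|f\|_4^4$. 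Hence both upper bounds follow once one has $\|f\|_2^2\le\tfrac32\zeta^{1/2}\|f\|_4^2$: dividing $\lambda\|f\|_4^4\le(\lambda-\mu_1(\zeta))\|f\|_2^2$ by $\|f\|_4^2$ yields the upper bound in~\eqref{eq:42}, and substituting the estimate for $\|f\|_2^2$ into $\|f'\|_2^2$ — retaining the $-\tfrac{5\lambda}8\|f\|_4^4$ term and bounding $\|f\|_4^4$ from below (and the spare $\|f\|_4^2$ from above) by the $L^4$ bounds just obtained — and inserting into $\|f\|_\infty^6\le 9\|f\|_4^4\|f'\|_2^2$ yields the upper bound in~\eqref{eq:41} after a direct computation.

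The main obstacle is the estimate $\|f\|_2^2\le\tfrac32\zeta^{1/2}\|f\|_4^2$. A naive $L^2$--$L^4$ interpolation is useless since $f$ has full support; one must exploit that $f$ is the positive ground state of $\mathfrak{k}_\lambda(\zeta)=-\tfrac{d^2}{dt^2}+(t-\zeta)^2+\lambda f^2$ at energy $\lambda$, hence is strongly concentrated near $t=\zeta$ with no secondary bump. The route I would take: write $f=\langle f,u_1(\,\cdot\,;\zeta)\rangle\,u_1(\,\cdot\,;\zeta)+r$ with $r\perp u_1(\,\cdot\,;\zeta)$; from $\lambda\|f\|_2^2-\lambda\|f\|_4^4=\langle\mathfrak{h}(\zeta)f,f\rangle\ge\mu_1(\zeta)\langle f,u_1\rangle^2+\mu_2(\zeta)\|r\|^2$ one gets $\|r\|^2\le\frac{\lambda-\mu_1(\zeta)}{\mu_2(\zeta)-\lambda}\langle f,u_1\rangle^2$, i.e.\ $\|f\|_2^2\le\frac{\mu_2(\zeta)-\mu_1(\zeta)}{\mu_2(\zeta)-\lambda}\langle f,u_1(\,\cdot\,;\zeta)\rangle^2$; the spectral gap is controlled because $\zeta\le 1$ and $\mu_2$ is decreasing on $[0,\widehat{\xi}_2]=[0,1]$ with $\mu_2(1)=3$, and the overlap $\langle f,u_1(\,\cdot\,;\zeta)\rangle$ is bounded in terms of $\|f\|_4$ by Hölder (or by using $\|f\|_\infty\le1$ from Theorem~\ref{thm:Sammenkog} together with the relation $\langle f^3,u_1\rangle=\tfrac{\lambda-\mu_1(\zeta)}\lambda\langle f,u_1\rangle$ obtained by testing~\eqref{eq:9b} against $u_1(\,\cdot\,;\zeta)$). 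Alternatively, the comparison-function argument from the proof of Theorem~\ref{thm:Sammenkog} yields explicit Gaussian tail bounds on $f$ that can be integrated directly. I expect the genuinely delicate point to be making all universal constants collapse to exactly $\tfrac32$ rather than a mere $O(1)$, which will force an efficient use of the virial identities~\eqref{eq:Hzero}--\eqref{eq:Htwo} and of the orthogonality $(t-\zeta)f\perp f$ coming from~\eqref{eq:36}.
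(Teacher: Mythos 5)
Your treatment of \eqref{eq:fzero}, of both lower bounds, and of the reduction of the two upper bounds to the single inequality
\begin{equation}\label{eq:keyest}
\|f\|_2^2 \leq \tfrac{3}{2}\,\zeta^{1/2}\,\|f\|_4^2
\end{equation}
is correct and matches the paper's structure. (Your derivation of \eqref{eq:fzero} by multiplying \eqref{eq:9b} by $f'$ and integrating is equivalent to the paper's computation with the auxiliary function $H(t)=f'(t)^2-(t-\zeta)^2f(t)^2+\lambda f(t)^2-\tfrac\lambda2 f(t)^4$, whose derivative is $H'(t)=-2(t-\zeta)f(t)^2$; the lower bound in \eqref{eq:42} via the trial pair $(z,\phi)=(\xi_0,\rho\,u_1(\cdot;\xi_0))$ is exactly what the paper does; and the chain $\lambda\|f\|_4^4\le(\lambda-\mu_1(\zeta))\|f\|_2^2$, $\|f\|_\infty^3\le3\|f\|_4^2\|f'\|_2$, $\|f'\|_2^2=\tfrac\lambda2\|f\|_2^2-\tfrac{5\lambda}8\|f\|_4^4$ from \eqref{eq:Hone} is the paper's derivation of the upper bounds.)

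The genuine gap is that you do not prove \eqref{eq:keyest}. You correctly identify it as the crux and you correctly anticipate that the constant $3/2$ is the delicate point, but neither route you sketch delivers it. The spectral decomposition $f=\langle f,u_1\rangle u_1+r$ gives $\|f\|_2^2\le\frac{\mu_2(\zeta)-\mu_1(\zeta)}{\mu_2(\zeta)-\lambda}\langle f,u_1\rangle^2$ and then, after bounding $\langle f,u_1\rangle$ by H\"older against $\|f\|_4$, produces a constant depending on $\mu_1(\zeta)$, $\mu_2(\zeta)$, $\lambda$ and $\|u_1(\cdot;\zeta)\|_{4/3}$ rather than the clean $\tfrac32\zeta^{1/2}$ --- there is no visible mechanism by which these collapse to $3/2$. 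The Gaussian tail bounds from Theorem~\ref{thm:Sammenkog} involve $\epsilon$-dependent constants $c_\epsilon,C_\epsilon$ and are likewise unsuitable for a sharp numerical constant. The paper's actual argument is elementary and different from both suggestions: it uses \eqref{eq:36} (which you flag but do not exploit here) to write, for any $\alpha>1$,
\begin{equation*}
\|f\|_2^2\le\int_0^{\alpha\zeta}|f|^2\,dt+\frac{1}{\zeta(\alpha-1)}\int_{\alpha\zeta}^{+\infty}(t-\zeta)|f|^2\,dt=\int_0^{\alpha\zeta}\frac{\alpha\zeta-t}{\zeta(\alpha-1)}|f|^2\,dt,
\end{equation*}
since the second integral equals $-\int_0^{\alpha\zeta}(t-\zeta)|f|^2\,dt$ by \eqref{eq:36}, and then applies Cauchy--Schwarz to the explicit polynomial weight on the bounded interval $[0,\alpha\zeta]$ and optimizes over $\alpha$ (optimum $\alpha=3$, yielding exactly $\sqrt{27/12}=3/2$). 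This is a weighted-truncation trick, not a spectral-gap argument, and it is precisely what makes the constant computable. Without it the proposal is incomplete.
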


\begin{remark}
A numerical calculation yields the approximate value 
$\|u_1(\,\cdot\,;\xi_0)\|_4^4 \approx 0.584$. One can also get a lower bound to 
$\|u_1(\,\cdot\,;\xi_0)\|_4^4$ using~\eqref{eq:42}: We have
\begin{displaymath}
\|u_1(\,\cdot\,;\xi_0)\|_4^4 \geq \frac{4}{9}\lim_{\lambda \rightarrow \Theta_0} 
\frac{(\lambda-\Theta_0)^2}
{\zeta(\lambda)\bigl(\lambda - \mu_1(\zeta(\lambda))\bigr)^2} 
=\frac{4}{9\xi_0}\approx 0.579.
\end{displaymath}
\end{remark}

\begin{proof}
The lower bound in~\eqref{eq:41} is an easy consequence of~\eqref{eq:fzero}. 
Both are proved in~\cite{pan2}. We reproduce the short proof for the sake of 
completeness. Indeed, define the function
\[
H(t)=f_{\zeta,\lambda}'(t)^2 - (t-\zeta)^2f_{\zeta,\lambda}(t)^2
+\lambda f_{\zeta,\lambda}(t)^2 -\frac{\lambda}{2}f_{\zeta,\lambda}(t)^4.
\]
A calculation, using~\eqref{eq:9b} shows that 
$H'(t)=-2(t-\zeta)f_{\zeta,\lambda}(t)^2$. By exponential decay it also holds 
that $\lim_{t\to\infty}H(t)=0$. Hence, by~\eqref{eq:36} we have that 
$H(0)=-\int_0^\infty H'(t)\,dt=0$. On the other hand we also have 
$H(0)=(\lambda-\zeta^2)f_{\zeta,\lambda}(0)^2
-\frac{\lambda}{2}f_{\zeta,\lambda}(0)^4$. Since
$f_{\zeta,\lambda}(0)\neq 0$, we get the equality in~\eqref{eq:fzero}.

We continue with the lower bound in~\eqref{eq:42}. By definition we have
\begin{align}
  \label{eq:54}
  - \frac{\lambda}{2} \| f_{\zeta,\lambda} \|_4^4 = {\mathcal
    F}_{\zeta,\lambda}[f_{\zeta,\lambda}] = \inf_{z \in {\mathbb R},
    \phi \in B^1} {\mathcal F}_{z,\lambda}[\phi].
\end{align}
We insert the trial state $z= \xi_0$, $\phi = \rho u_1(\,\cdot\,;\xi_0)$,
with $\rho = \sqrt{(\lambda-\Theta_0)/[\lambda \|
  u_1(\,\cdot\,;\xi_0)\|_4^4]}$, in~\eqref{eq:54}. This yields,
\begin{align}
  \label{eq:55}
   - \frac{\lambda}{2} \| f_{\zeta,\lambda} \|_4^4 \leq -
   \frac{\lambda}{2} \frac{(\lambda-\Theta_0)^2}{\lambda^2 \|
  u_1(\,\cdot\,;\xi_0)\|_4^4}.
\end{align}
This finishes the proof of the lower bound in~\eqref{eq:42}.

Finally, we turn to the upper bounds. Using the variational characterization of 
$\mu_1(\zeta),$ equation~\eqref{eq:10b} implies that
\begin{equation}\label{eq:10badd}
\lambda \| f_{\zeta,\lambda} \|_4^4  
 \leq (\lambda-\mu_1(\zeta)) \| f_{\zeta,\lambda}\|_2^2\,.
\end{equation}
We estimate, using~\eqref{eq:36}, and for $\alpha>1$ (recall that $\zeta >0$),
\begin{align}
  \label{eq:37}
  \| f_{\zeta,\lambda}\|_2^2 &\leq \int_0^{\alpha \zeta}
  |f_{\zeta,\lambda}|^2\,dt + \frac{1}{\zeta(\alpha-1)}
  \int_{\alpha\zeta}^{+\infty} (t-\zeta) |f_{\zeta,\lambda}|^2\,dt 
  \nonumber \\
 & = \int_0^{\alpha \zeta} \frac{\alpha \zeta -t}{\zeta(\alpha-1)}
  |f_{\zeta,\lambda}|^2\,dt\nonumber \\
 &\leq \zeta^{1/2} \sqrt{\frac{\alpha^3}{3(\alpha-1)^2}} 
  \|f_{\zeta,\lambda}\|_4^2. 
\end{align}
We choose the optimal $\alpha = 3$ and implement~\eqref{eq:10badd} to get
\begin{equation}\label{eq:38}
   \| f_{\zeta,\lambda}\|_2^2 \leq  \frac{3}{2}\zeta^{1/2} 
   \|f_{\zeta,\lambda}\|_4^2 \leq \frac{3}{2} \zeta^{1/2} 
   \sqrt{\frac{\lambda - \mu_1(\zeta)}{\lambda}}\| f_{\zeta,\lambda}\|_2, 
\end{equation}
i.e.
\begin{align}
  \label{eq:39}
 \| f_{\zeta,\lambda}\|_2 \leq   \frac{3}{2}\, \zeta^{1/2}\, 
   \sqrt{\frac{\lambda - \mu_1(\zeta)}{\lambda}}.
\end{align}
Combining~\eqref{eq:10badd} and~\eqref{eq:39} yields the upper 
bound~\eqref{eq:42}. 

One easily obtains
\begin{align}\label{eq:35}
  f_{\zeta,\lambda}(t)^3 = - \int_t^{+\infty}
  (f_{\zeta,\lambda}^3)'(\tau) \,d\tau 
  \leq 3 \| f_{\zeta,\lambda}\|_4^2 \|f_{\zeta,\lambda}'\|_2\,.
\end{align}
From~\eqref{eq:Hone},~\eqref{eq:55} and~\eqref{eq:38} we have 
\begin{align}
\|f_{\zeta,\lambda}'\|_2^2 
& = \lambda\biggl(\frac{1}{2}\|f_{\zeta,\lambda}\|_2^2 
- \frac{5}{16}\|f_{\zeta,\lambda}\|_4^4\biggr)\\
& \leq \lambda\|f_{\zeta,\lambda}\|_2^2
\biggl(\frac{1}{2}-\frac{5}{12\zeta^{1/2}} \|f_{\zeta,\lambda}\|_4^2\biggr)\\
& \leq \lambda\|f_{\zeta,\lambda}\|_2^2
\biggl(\frac{1}{2}
-\frac{5(\lambda-\Theta_0)}{12\zeta^{1/2}
\lambda\|u_1(\,\cdot\,;\xi_0)\|_4^2}\biggr),
\end{align}
which combined with~\eqref{eq:10badd},~\eqref{eq:39} and~\eqref{eq:35} implies
\begin{multline}
\label{eq:40}
\lambda \| f_{\zeta,\lambda} \|_{\infty}^2 
\leq \lambda \big\{ 3\|f_{\zeta,\lambda}\|_4^2
\|f_{\zeta,\lambda}'\|_2\big\}^{2/3}\\  
\leq \lambda\Biggl\{3\frac{1}{\sqrt{\lambda}}(\lambda-\mu_1(\zeta))^{1/2}
\|f_{\zeta,\lambda}\|_2
\sqrt{\lambda}\|f_{\zeta,\lambda}\|_2\biggl(\frac{1}{2}
-\frac{5(\lambda-\Theta_0)}{12\zeta^{1/2}
\lambda\|u_1(\,\cdot\,;\xi_0)\|_4^2}\biggr)^{1/2}\Biggr\}^{2/3}\\
\leq \lambda \Biggl\{3(\lambda-\mu_1(\zeta))^{1/2}
\frac{9}{4}\zeta\frac{1}{\lambda}(\lambda-\mu_1(\zeta))
\biggl(\frac{1}{2}
-\frac{5(\lambda-\Theta_0)}{12\zeta^{1/2}
\lambda\|u_1(\,\cdot\,;\xi_0)\|_4^2}\biggr)^{1/2}
\Biggr\}^{2/3}\\
\leq \frac{9}{2^{4/3}}\zeta^{2/3}\lambda^{1/3}
\biggl(\frac{1}{2}
-\frac{5(\lambda-\Theta_0)}{12\zeta^{1/2}
\lambda\|u_1(\,\cdot\,;\xi_0)\|_4^2}\biggr)^{1/3}
(\lambda-\mu_1(\zeta)).
\end{multline}
\end{proof}

\subsection{Bounds on $\zeta(\lambda)$}
It follows from Theorem~\ref{thm:Sammenkog} that $\zeta(\lambda) 
\in {\mathfrak I}(\lambda)$.
These bounds on $\zeta$ can be sharpened considerably.

\begin{lemma}\label{lem:intervals}
Let $\Theta_0 < \lambda \leq 1$. It holds that 
\begin{equation}\label{eq:zetabounds}
\sqrt{\lambda/2}\leq \zeta(\lambda)\leq \sqrt{\lambda}.
\end{equation}
\end{lemma}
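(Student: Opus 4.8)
The plan is to extract both bounds from the virial identity~\eqref{eq:fzero}, $f_{\zeta,\lambda}(0)^2 = \frac{2}{\lambda}(\lambda-\zeta^2)$, which already confines $\zeta(\lambda)$ to $\left]0,\sqrt{\lambda}\right]$. Indeed, the upper bound $\zeta(\lambda)\leq\sqrt{\lambda}$ is immediate: since $f_{\zeta,\lambda}(0)^2\geq 0$, equation~\eqref{eq:fzero} forces $\lambda-\zeta^2\geq 0$. (Here one uses that $f_{\zeta,\lambda}$ is nontrivial, which holds because $\zeta(\lambda)\in{\mathfrak I}(\lambda)$ by Theorem~\ref{thm:Sammenkog}, i.e. $\mu_1(\zeta)<\lambda$; in the borderline case $\lambda=1$ one notes $\zeta(1)>0$ from~\eqref{eq:36}, so $\zeta\leq 1$ with equality excluded, but $\leq$ is all that is claimed.)

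For the lower bound $\zeta(\lambda)\geq\sqrt{\lambda/2}$ the idea is to play~\eqref{eq:fzero} against the sup-norm bound $\|f_{\zeta,\lambda}\|_\infty\leq 1$ from~\eqref{eq:fone}. Since $f_{\zeta,\lambda}(0)^2\leq\|f_{\zeta,\lambda}\|_\infty^2\leq 1$, equation~\eqref{eq:fzero} gives $\frac{2}{\lambda}(\lambda-\zeta^2)\leq 1$, that is $2\lambda-2\zeta^2\leq\lambda$, hence $\zeta^2\geq\lambda/2$ and $\zeta(\lambda)\geq\sqrt{\lambda/2}$. This is the whole argument; no asymptotics or spectral input beyond the already-established identities is needed.

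I do not expect a serious obstacle here. The only point requiring a moment's care is that~\eqref{eq:fzero} presupposes $f_{\zeta,\lambda}(0)\neq 0$ (used to divide in the proof of Proposition~\ref{Prop:UnifNormsf}); this is legitimate because for $\lambda>\Theta_0$ one has $\zeta(\lambda)\in{\mathfrak I}(\lambda)$, so the minimizer is nontrivial, and a nontrivial solution of the ODE~\eqref{eq:9b} with Neumann condition $u'(0)=0$ cannot vanish at $0$ by Cauchy uniqueness. With that remark in place, both inequalities in~\eqref{eq:zetabounds} follow in two lines each from~\eqref{eq:fone} and~\eqref{eq:fzero}.
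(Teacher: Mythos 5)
Your proof is correct and matches the paper's argument: the paper likewise deduces $\zeta^2<\lambda$ from~\eqref{eq:fzero} (using $f_{\zeta,\lambda}(0)\neq 0$), and obtains the lower bound by combining $\|f_{\zeta,\lambda}\|_\infty\leq 1$ with the inequality $2(\lambda-\zeta^2)\leq\lambda\|f_{\zeta,\lambda}\|_\infty^2$ from~\eqref{eq:41}, which is itself just~\eqref{eq:fzero} together with $f_{\zeta,\lambda}(0)^2\leq\|f_{\zeta,\lambda}\|_\infty^2$ — exactly your chain, only routed through Proposition~\ref{Prop:UnifNormsf}. Your extra remark on why $f_{\zeta,\lambda}(0)\neq 0$ (nontriviality plus Cauchy uniqueness for~\eqref{eq:9b}) is a correct justification of a point the paper leaves implicit.
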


\begin{proof}
From~\eqref{eq:fzero} we find that $\zeta^2<\lambda$. Moreover, by the 
bound~\eqref{eq:fone}, $\|f_{\zeta,\lambda}\|_\infty\leq 1$, combined 
with the lower bound~\eqref{eq:41}, we easily obtain the lower bound 
$\zeta(\lambda)\geq \sqrt{\lambda/2}$.
\end{proof}

\begin{remark}\label{rem:zetabound}
The lower bound in Lemma~\ref{lem:intervals} can be improved 
using both the lower and upper bounds in~\eqref{eq:41}, see 
Figure~\ref{fig:zetabound}.
\begin{figure}[H]
\includegraphics[width=0.65\textwidth]{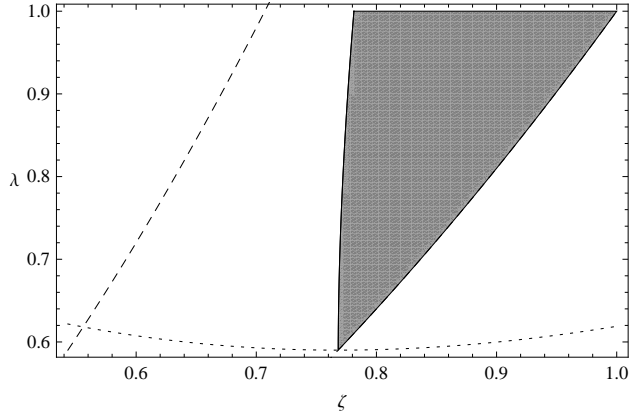}
\caption{Different bounds on $\zeta(\lambda)$. Using Lemma~\ref{lem:intervals}
we find that $\zeta(\lambda)$ should be between the dashed lines. Numerically, 
with the help of~\eqref{eq:41} instead of~\eqref{eq:fone} we find that 
$\zeta(\lambda)$ belongs to the shaded area. The dotted line is the graph of 
$\mu_1(\zeta)$.}
\label{fig:zetabound}
\end{figure}
\end{remark}

\section{The analysis of ${\mathfrak k}_{\lambda}(\nu)$}\label{sec:mainop}

\subsection{Starting point}
Recall the operator ${\mathfrak k}_{\lambda}(\nu)$ with associated eigenvalues 
$\{ \lambda_{j}(\nu)\}$ defined in~\eqref{eq:1b}. We will for shortness write 
$f$ instead of $f_{\zeta(\lambda),\lambda}$ and $\zeta$ instead of 
$\zeta(\lambda)$ in this section.
From the sign of the perturbation and Proposition~\ref{Prop:UnifNormsf} we get:
\begin{proposition}
Let $\Theta_0\leq \lambda\leq 1$. We have the following estimates on the 
eigenvalues of ${\mathfrak k}_{\lambda}(\nu)$:
\begin{equation}\label{eq:45-1}
\mu_j(\nu) \leq \lambda_j(\nu) \leq \mu_j(\nu) 
  + \frac{9}{2^{4/3}}\zeta^{2/3} \lambda^{1/3}
\biggl(\frac{1}{2}
-\frac{5(\lambda-\Theta_0)}{12\zeta^{1/2}
\lambda\|u_1(\,\cdot\,;\xi_0)\|_4^2}\biggr)^{1/3} (\lambda - \mu_1(\zeta)),
\end{equation}
and
\begin{equation}\label{eq:46-1}
\mu_1(\nu) \leq \lambda_1(\nu) \leq \mu_1(\nu) 
               + \frac{3^{3/4}}{2^{1/2}} \zeta^{1/2}
  (\lambda - \mu_1(\zeta)) \bigl(\mu_1(\nu)/2-\nu\mu_1'(\nu)/4\bigr)^{1/4}.
\end{equation}
\end{proposition}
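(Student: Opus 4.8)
The plan is to split into the two displayed estimates. For the general-$j$ bound \eqref{eq:45-1} I would argue purely by comparison of quadratic forms. The operator ${\mathfrak k}_{\lambda}(\nu)$ differs from the de Gennes operator ${\mathfrak h}(\nu)$ by the non-negative potential $\lambda f^2 \geq 0$ (recall $f = f_{\zeta(\lambda),\lambda}$), so the min-max principle immediately gives the lower bound $\mu_j(\nu) \leq \lambda_j(\nu)$. For the upper bound, dominate the perturbation by its supremum, $0 \leq \lambda f^2(t) \leq \lambda\|f\|_\infty^2$, so that ${\mathfrak k}_{\lambda}(\nu) \leq {\mathfrak h}(\nu) + \lambda\|f\|_\infty^2$ as quadratic forms; by min-max again $\lambda_j(\nu) \leq \mu_j(\nu) + \lambda\|f\|_\infty^2$, and inserting the upper bound on $\lambda\|f_{\zeta,\lambda}\|_\infty^2$ from \eqref{eq:41} in Proposition~\ref{Prop:UnifNormsf} gives \eqref{eq:45-1}.

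For the sharper bound \eqref{eq:46-1} on $\lambda_1(\nu)$ the idea is to replace the crude sup-norm estimate by the variational (trial-state) bound with $u_1(\,\cdot\,;\nu)$, exactly as in the proof of the second item of Theorem~\ref{thm:nuzeta}: $\lambda_1(\nu) \leq \langle u_1(\,\cdot\,;\nu), {\mathfrak k}_{\lambda}(\nu) u_1(\,\cdot\,;\nu)\rangle = \mu_1(\nu) + \lambda\|f u_1(\,\cdot\,;\nu)\|_2^2$. I would then estimate $\|f u_1\|_2^2 = \int_0^{+\infty} f^2 u_1^2\,dt \leq \|f\|_4^2\,\|u_1(\,\cdot\,;\nu)\|_4^2$ by Cauchy--Schwarz, bound the first factor via \eqref{eq:42}, namely $\lambda\|f\|_4^2 \leq \tfrac32\zeta^{1/2}(\lambda - \mu_1(\zeta))$, and control the second factor by a Gagliardo--Nirenberg inequality. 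Concretely, on $L^2({\mathbb R}^+)$ one has $\|u\|_4^4 \leq \tfrac{2}{\sqrt 3}\,\|u'\|_2\,\|u\|_2^3$, which follows by even reflection from the classical sharp one-dimensional inequality on ${\mathbb R}$ (whose extremal is a rescaled hyperbolic secant); applied to the normalized $u_1(\,\cdot\,;\nu)$ together with the virial identity \eqref{eq:2a}, i.e. $\|u_1'(\,\cdot\,;\nu)\|_2^2 = \mu_1(\nu)/2 - \nu\mu_1'(\nu)/4$, this yields $\|u_1(\,\cdot\,;\nu)\|_4^2 \leq (2/\sqrt 3)^{1/2}\bigl(\mu_1(\nu)/2 - \nu\mu_1'(\nu)/4\bigr)^{1/4}$. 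Multiplying the three bounds and simplifying the constant, $\tfrac32\cdot(2/\sqrt 3)^{1/2} = 3^{3/4}/2^{1/2}$, gives \eqref{eq:46-1}.

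The only step that is not pure bookkeeping is the half-line Gagliardo--Nirenberg inequality with the specific constant $2/\sqrt 3$; the mild subtlety there is that the even reflection of a Neumann function is an admissible competitor on the whole line, so that the sharp whole-line constant (attained by a hyperbolic secant) transfers, after tracking the factors of $2$ coming from the reflection, into the stated half-line constant. Everything else --- the form comparison, Cauchy--Schwarz, and the substitution of \eqref{eq:41}, \eqref{eq:42} and the virial identity \eqref{eq:2a} --- is routine.
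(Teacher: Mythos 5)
Your argument is correct and follows the paper's proof essentially step for step: nonnegativity of $\lambda f^2$ plus the sup-norm bound~\eqref{eq:41} for \eqref{eq:45-1}, and the trial state $u_1(\,\cdot\,;\nu)$, Cauchy--Schwarz, the $L^4$ bound~\eqref{eq:42}, the inequality $\|u\|_4^4\leq \tfrac{2}{\sqrt 3}\|u'\|_2\|u\|_2^3$, and the virial identity~\eqref{eq:2a} for \eqref{eq:46-1}. The one cosmetic difference is that the paper cites Nagy~\cite{na} for the half-line $L^4$ interpolation inequality, whereas you rederive it by even reflection from the sharp whole-line Gagliardo--Nirenberg inequality with constant $1/\sqrt 3$; your reflection bookkeeping ($2\|u\|_4^4\leq \tfrac{1}{\sqrt3}\sqrt2\cdot 2^{3/2}\|u'\|_2\|u\|_2^3$) and the resulting constant $3^{3/4}/2^{1/2}=\tfrac32\cdot 2^{1/2}/3^{1/4}$ are both right.
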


\begin{proof}
The estimate~\eqref{eq:45-1} is an immediate consequence of
\eqref{eq:41}. To show the second estimate~\eqref{eq:46-1}, we notice that
\begin{equation*}
\lambda_1(\nu) \leq \langle u_1 , {\mathfrak k}_{\lambda}(\nu) u_1 \rangle 
= \mu_1(\nu) +  \lambda \| f u_1 \|_2^2,
\leq \mu_1(\nu)+ \lambda \| f \|_4^2 \|u_1 \|_4^2,
\end{equation*}
and
\begin{equation}\label{eq:nagy}
  \|u_1 \|_4^2 \leq \frac{2^{1/2}}{3^{1/4}}\| u_1 \|_2^{3/2} \| u_1' \|_2^{1/2} 
\leq \frac{2^{1/2}}{3^{1/4}} \bigl(\mu_1(\nu)/2-\nu\mu_1'(\nu)/4\bigr)^{1/4}.
\end{equation}
The first inequality in~\eqref{eq:nagy} is due to Nagy~\cite{na}, while the 
second one follows from~\eqref{eq:2a}. The upper bound in~\eqref{eq:46-1} 
now follows from the upper bound in~\eqref{eq:42}.
\end{proof}

\begin{lemma}
If $\nu\not\in \mathfrak{I}(\lambda)$ then $\lambda_1(\nu) \geq \lambda$.
\end{lemma}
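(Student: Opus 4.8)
The plan is to simply unwind the definition of $\mathfrak{I}(\lambda)$ and combine it with the monotonicity of the ground state energy under a non-negative perturbation. By definition~\eqref{eq:2}, $\mathfrak{I}(\lambda) = \{\xi\in\mathbb{R} : \mu_1(\xi) < \lambda\}$, so the hypothesis $\nu\notin\mathfrak{I}(\lambda)$ is exactly the statement $\mu_1(\nu)\geq\lambda$. (The only point to be careful about is that the inequality defining $\mathfrak{I}(\lambda)$ is strict, so its complement is the closed condition $\mu_1(\nu)\geq\lambda$.)

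Next I would invoke the lower bound already recorded in the preceding proposition: since $f = f_{\zeta(\lambda),\lambda}\geq 0$, the potential term $\lambda f^2$ in $\mathfrak{k}_\lambda(\nu)$ is non-negative, and comparison of quadratic forms gives $\mu_1(\nu)\leq\lambda_1(\nu)$, which is precisely the left-hand inequality in~\eqref{eq:45-1} (equivalently~\eqref{eq:46-1}). Chaining the two facts yields
\[
\lambda_1(\nu)\geq\mu_1(\nu)\geq\lambda,
\]
which is the claim. There is no real obstacle here: the lemma is an immediate consequence of the sign of the nonlinear perturbation together with the definition of the set $\mathfrak{I}(\lambda)$, and it serves merely to reduce the problem to the study of $\lambda_1(\nu)$ for $\nu\in\mathfrak{I}(\lambda)$ in the subsequent analysis.
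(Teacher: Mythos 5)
Your proof is correct and is essentially the paper's own argument: unwind the definition of $\mathfrak{I}(\lambda)$ to get $\mu_1(\nu)\geq\lambda$, then use the left inequality of~\eqref{eq:45-1} (i.e.\ $\lambda_1(\nu)\geq\mu_1(\nu)$, which follows from the non-negativity of the perturbation $\lambda f^2$) to conclude.
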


\begin{proof}
If $\nu\not\in\mathfrak{I}(\lambda)$ then, by~\eqref{eq:45-1}, we 
get $\lambda_1(\nu)\geq \mu_1(\nu)\geq \lambda$.
\end{proof}

We continue with some identities.

\begin{proposition}
Suppose that $\nu_0$ is a stationary point for $\lambda_1$, i.e.
\begin{align}\label{eq:2bis}
  \lambda_1'(\nu_0) = 0\,.
\end{align}
Then we have the following identities:
\begin{gather}
  \{ \lambda_1(\nu_0) - \nu_0^2 -\lambda f^2(0) \} v^2_1(0;\nu_0) 
  = 2\lambda \int_0^{+\infty}
  v^2_1(t;\nu_0) f(t) f'(t)\,dt\,,\label{eq:3b}\\
\int_0^{+\infty} (t- \nu_0) v^2_1(t;\nu_0) \,dt=0\,,\label{eq:3afh}\\
 \| (t-\nu_0)  v_1(\,\cdot\,;\nu_0)\|_2^2 
+ \lambda \int_0^{+\infty} t v^2_1(t;\nu_0)
 f(t) f'(t)\,dt = \| v_1'(\,\cdot\,;\nu_0)\|_2^2\,,  \label{eq:4b}\\
 \| v_1'(\,\cdot\,;\nu_0)\|_2^2 + \| (t-\nu_0) v_1(\,\cdot\,;\nu_0)\|_2^2 
    + \lambda \| f \, v_1(\,\cdot\,;\nu_0) \|_2 ^2=\lambda_1(\nu_0)\,.
\label{eq:5b}
\end{gather}
\end{proposition}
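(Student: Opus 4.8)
The plan is to establish the four identities in turn, deriving \eqref{eq:3afh} first and using it afterwards; note that \eqref{eq:5b} does not require the stationarity hypothesis \eqref{eq:2bis} at all. First I would prove \eqref{eq:5b}, which is just the eigenvalue equation tested against its own eigenfunction: multiplying $\mathfrak{k}_\lambda(\nu_0) v_1 = \lambda_1(\nu_0) v_1$ by $v_1(\,\cdot\,;\nu_0)$, integrating over $\mathbb{R}^+$, integrating the $-v_1''$ term by parts (the boundary term at $0$ vanishes by the Neumann condition $v_1'(0;\nu_0)=0$, and there is no contribution at $+\infty$ by decay), and using $\|v_1\|_2=1$ gives \eqref{eq:5b} at once. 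Next, \eqref{eq:3afh} follows from the Feynman--Hellmann formula exactly as in \eqref{eq:1}: since $\lambda_1(\nu)$ is a simple eigenvalue it is analytic in $\nu$ with analytic normalized ground state $v_1(\,\cdot\,;\nu)$, and because the extra potential $\lambda f^2$ does not depend on $\nu$ we get $\lambda_1'(\nu) = -2\int_0^{+\infty}(t-\nu) v_1(t;\nu)^2\,dt$; evaluating at $\nu_0$ and invoking \eqref{eq:2bis} yields \eqref{eq:3afh}.

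For \eqref{eq:3b} I would adapt the Dauge--Helffer integration by parts behind \eqref{eq:4}. Multiply $-v_1'' + (t-\nu_0)^2 v_1 + \lambda f^2 v_1 = \lambda_1(\nu_0) v_1$ by $v_1'$ and integrate over $\mathbb{R}^+$. The term $-\int v_1'' v_1' = \tfrac12 v_1'(0)^2$ vanishes by the Neumann condition; in each of the three remaining terms write $v_1 v_1' = \tfrac12 (v_1^2)'$ and integrate by parts once more. This produces the bulk integrals $-\int(t-\nu_0) v_1^2$ and $-\lambda\int f f' v_1^2$, together with the boundary contributions at $t=0$, namely $-\tfrac12\nu_0^2 v_1^2(0)$ from the $(t-\nu_0)^2$ term, $-\tfrac{\lambda}{2} f^2(0) v_1^2(0)$ from the $\lambda f^2$ term and $-\tfrac12 \lambda_1(\nu_0) v_1^2(0)$ on the right. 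Collecting these and then discarding $\int(t-\nu_0) v_1^2$ via \eqref{eq:3afh} gives precisely \eqref{eq:3b}.

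Finally, \eqref{eq:4b} is a virial-type identity, proved by a scaling argument in the spirit of Section~\ref{sec:linear}, but now with a potential $\lambda f^2$ that is not scale-homogeneous, which is exactly why the extra term $\lambda\int t f f' v_1^2$ survives. Set $v_\ell(t) := \sqrt{\ell}\, v_1(\ell t;\nu_0)$, which has $\|v_\ell\|_2 = 1$, and compute the quadratic form of $\mathfrak{k}_\lambda(\nu_0)$ on it: it equals $\ell^2 \|v_1'\|_2^2 + \int_0^{+\infty}(s/\ell-\nu_0)^2 v_1^2\,ds + \lambda\int_0^{+\infty} f(s/\ell)^2 v_1^2\,ds$. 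By the min--max principle this is $\geq \lambda_1(\nu_0)$ for all $\ell>0$, with equality at $\ell=1$ by \eqref{eq:5b}, so its derivative in $\ell$ vanishes at $\ell=1$. Differentiating under the integral sign---legitimate because $v_1$ decays like a Gaussian (the operator has compact resolvent) and $f,f'$ decay by Theorem~\ref{thm:Sammenkog} and \eqref{eq:9b}---and setting $\ell=1$ gives $2\|v_1'\|_2^2 = 2\int s(s-\nu_0) v_1^2\,ds + 2\lambda\int s f f' v_1^2\,ds$; writing $s(s-\nu_0) = (s-\nu_0)^2 + \nu_0(s-\nu_0)$ and killing the last term by \eqref{eq:3afh} yields \eqref{eq:4b}. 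The only delicate points are the dominated-convergence justification in this last step and the bookkeeping of boundary terms in \eqref{eq:3b}, both routine given the Neumann condition and the Gaussian decay of $v_1$, $f$ and $f'$; I expect no genuine obstacle.
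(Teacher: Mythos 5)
Your proof is correct and takes essentially the same route as the paper, which merely names the four ingredients (energy equation, Feynman--Hellmann, Dauge--Helffer, virial/scaling) without spelling them out; your write-up supplies exactly the integrations by parts and the $\ell$-scaling argument that those names stand for, and the bookkeeping in each case checks out.
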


\begin{proof}
Equation~\eqref{eq:3b} is a Dauge-Helffer type formula,~\eqref{eq:3afh} is the
Feynman-Hell\-mann formula,~\eqref{eq:4b} follows by the virial theorem 
and~\eqref{eq:5b} is just the energy equation.
\end{proof}

\begin{corollary}
If $0<\zeta<\nu_0$, $\lambda_1'(\nu_0)=0$ and 
$\int_0^{+\infty} v^2_1(t;\nu_0) f(t) f'(t)\,dt\geq 0$ then 
$\lambda_1(\nu) > \lambda$.
\end{corollary}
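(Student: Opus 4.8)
The plan is to read the conclusion straight off the Dauge--Helffer type identity~\eqref{eq:3b}, using the boundary value~\eqref{eq:fzero} for $f(0)^2$ and the bound $\zeta^2\le\lambda$. Throughout, $f=f_{\zeta(\lambda),\lambda}$ and $\zeta=\zeta(\lambda)$, as in this section, and I write $\nu_0$ for the given stationary point, so the claim to be proved is $\lambda_1(\nu_0)>\lambda$, the value of the first eigenvalue at $\nu_0$.

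First I would note that the Neumann ground state $v_1(\,\cdot\,;\nu_0)$ of ${\mathfrak k}_\lambda(\nu_0)$ does not vanish at the endpoint: if $v_1(0;\nu_0)=0$, then together with the Neumann condition $v_1'(0;\nu_0)=0$ Cauchy uniqueness for the second order ODE would force $v_1\equiv 0$, which is impossible. Hence $v_1(0;\nu_0)^2>0$. Since $\lambda_1'(\nu_0)=0$, identity~\eqref{eq:3b} applies, and by hypothesis its right-hand side $2\lambda\int_0^{+\infty}v_1^2(t;\nu_0)f(t)f'(t)\,dt$ is nonnegative. Dividing~\eqref{eq:3b} by $v_1(0;\nu_0)^2>0$ therefore gives
\[
\lambda_1(\nu_0)\ \ge\ \nu_0^2+\lambda f(0)^2 .
\]

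Next I would insert~\eqref{eq:fzero}, namely $\lambda f(0)^2=2(\lambda-\zeta^2)$; note that this identity already forces $\zeta^2\le\lambda$ (consistent with~\eqref{eq:zetabounds}). Thus $\lambda_1(\nu_0)\ge \nu_0^2+2(\lambda-\zeta^2)$, and since $0<\zeta<\nu_0$ gives $\nu_0^2>\zeta^2$, we obtain
\[
\lambda_1(\nu_0)\ >\ \zeta^2+2(\lambda-\zeta^2)\ =\ 2\lambda-\zeta^2\ \ge\ \lambda ,
\]
the last step being $\zeta^2\le\lambda$ once more. This is the desired inequality. I do not expect a genuine obstacle in this corollary: the only non-algebraic point is the non-vanishing of $v_1$ at $0$, which is routine, and the substance of the argument lies entirely in the already-established ingredients~\eqref{eq:3b} and~\eqref{eq:fzero}; the remainder is a short computation.
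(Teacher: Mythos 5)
Your proof is correct and follows the same route as the paper: apply the Dauge--Helffer identity~\eqref{eq:3b} with the sign hypothesis to get $\lambda_1(\nu_0)\ge\nu_0^2+\lambda f(0)^2$, then substitute~\eqref{eq:fzero} and use $\zeta^2\le\lambda<\nu_0^2$. The only difference is that you explicitly justify $v_1(0;\nu_0)\neq 0$ via Cauchy uniqueness and the Neumann condition before dividing, a step the paper leaves implicit; that is a sound and worthwhile addition.
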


\begin{proof}
From~\eqref{eq:3b} and~\eqref{eq:fzero} we get
\begin{equation*}
\lambda_1(\nu_0) \geq \lambda f(0)^2+\nu_0^2 = 
\lambda + (\lambda-\zeta^2)+(\nu_0^2-\zeta^2) > \lambda,
\end{equation*}
since $\lambda\geq \zeta^2$ by~\eqref{eq:zetabounds} and $\nu_0^2>\zeta^2$ by 
the assumption.
\end{proof}

\begin{remark}
From Theorem~\ref{thm:largenu} we notice that it is enough to consider 
$\nu_0>1.33$ and so the condition on $\nu_0$ and $\zeta$ is not restricting 
since $\zeta<1$.

It is also worth to notice that if 
$\int_0^{+\infty} v^2_1(t;\nu_0) f(t) f'(t)\,dt < 0$ then also
\begin{equation*}
\int_0^{+\infty} t v^2_1(t;\nu_0) f(t) f'(t)\,dt < 0,
\end{equation*}
since there exists a $t_0$ such that $f'(t)$ is positive for 
$t\in\left]0,t_0\right[$ and negative for $t\in\left]t_0,\infty\right[$, 
see~\cite{pan2}.
\end{remark}

\subsection{Lower bound on $\lambda_1(\nu)$}

\begin{lemma}
If $\lambda_2(\nu)>\lambda+(\nu-\zeta)^2$ then it holds that
\begin{equation}\label{eq:temple}
\lambda_1(\nu)\geq \lambda + (\nu-\zeta)^2
\biggl[1 - \frac{4\|(t-\zeta)f\|_2^2}
{\bigl(\lambda_2(\nu)-\lambda-(\nu-\zeta)^2\bigr)\|f\|_2^2}\biggr].
\end{equation}
\end{lemma}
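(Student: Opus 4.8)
The plan is to apply Temple's inequality to the operator $\mathfrak{k}_\lambda(\nu)$, using the natural trial function obtained by translating the ground state $f = f_{\zeta,\lambda}$ of $\mathfrak{k}_\lambda(\zeta)$. Recall from the second Remark in the introduction that $f$ is itself a positive eigenfunction of $\mathfrak{k}_\lambda(\zeta)$ with eigenvalue $\lambda$. So I would take as test state $w(t) := f(t + (\nu - \zeta))$, suitably extended/cut off to $\mathbb{R}^+$, or more cleanly: work with the shifted potential. The point is that
\[
\Big\{-\frac{d^2}{dt^2} + (t-\nu)^2 + \lambda f(t)^2\Big\} f(t-(\nu-\zeta))
= \lambda f(t-(\nu-\zeta)) + \big[\lambda f(t)^2 - \lambda f(t-(\nu-\zeta))^2\big] f(t-(\nu-\zeta)),
\]
using that $f(t-(\nu-\zeta))$ solves $-u'' + (t-\nu)^2 u + \lambda f(t-(\nu-\zeta))^2 u = \lambda u$ with Neumann condition at $t = \nu-\zeta$. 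There is a boundary-term subtlety at $t=0$ since the shifted Neumann condition sits at $\nu-\zeta \neq 0$; I would handle this by instead taking the trial state to be $f$ itself translated, but computing the quadratic form $\langle v, \mathfrak{k}_\lambda(\nu) v\rangle$ directly with $v(t) = f(t)$ — no wait, the cleanest route is the following.

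Take $v = f$ (the ground state at parameter $\zeta$) as the trial function for $\mathfrak{k}_\lambda(\nu)$ directly. Then
\[
A := \langle f, \mathfrak{k}_\lambda(\nu) f\rangle / \|f\|_2^2
= \frac{1}{\|f\|_2^2}\Big(\|f'\|_2^2 + \|(t-\nu)f\|_2^2 + \lambda\|f^2\|_2^2\Big).
\]
Expanding $(t-\nu)^2 = (t-\zeta)^2 + 2(\zeta-\nu)(t-\zeta) + (\nu-\zeta)^2$ and using the virial identity~\eqref{eq:36} (i.e. $\int (t-\zeta)f^2 = 0$), the cross term vanishes and $\|(t-\nu)f\|_2^2 = \|(t-\zeta)f\|_2^2 + (\nu-\zeta)^2\|f\|_2^2$. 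Combining with~\eqref{eq:10b}, which says $\|f'\|_2^2 + \|(t-\zeta)f\|_2^2 + \lambda\|f\|_4^4 = \lambda\|f\|_2^2$, gives $A = \lambda + (\nu-\zeta)^2$ exactly. For the variance term in Temple's inequality,
\[
B := \|\mathfrak{k}_\lambda(\nu) f\|_2^2/\|f\|_2^2 - A^2.
\]
Using $\mathfrak{k}_\lambda(\nu) f = \mathfrak{k}_\lambda(\zeta) f + \big[(t-\nu)^2 - (t-\zeta)^2\big] f = \lambda f + \big[2(\zeta-\nu)(t-\zeta) + (\nu-\zeta)^2\big]f$, one finds $\mathfrak{k}_\lambda(\nu) f - A f = 2(\zeta-\nu)(t-\zeta)f$ (the constant $(\nu-\zeta)^2$ and the $\lambda$ combine into $A$, and the linear-in-$(t-\zeta)$ part has mean zero so it survives as the fluctuation). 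Hence $B = \|\mathfrak{k}_\lambda(\nu)f - Af\|_2^2/\|f\|_2^2 = 4(\nu-\zeta)^2 \|(t-\zeta)f\|_2^2/\|f\|_2^2$.

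Now Temple's inequality (as quoted earlier via~\cite{kato3}, and used in the form~\eqref{eq:Temple1}) states that if $\lambda_2(\nu) > A$ then $\lambda_1(\nu) \geq A - B/(\lambda_2(\nu) - A)$. Substituting $A = \lambda + (\nu-\zeta)^2$ and $B = 4(\nu-\zeta)^2\|(t-\zeta)f\|_2^2/\|f\|_2^2$ yields precisely
\[
\lambda_1(\nu) \geq \lambda + (\nu-\zeta)^2 - \frac{4(\nu-\zeta)^2\|(t-\zeta)f\|_2^2}{\big(\lambda_2(\nu) - \lambda - (\nu-\zeta)^2\big)\|f\|_2^2},
\]
which is~\eqref{eq:temple} after factoring out $(\nu-\zeta)^2$. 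The hypothesis $\lambda_2(\nu) > \lambda + (\nu-\zeta)^2$ is exactly the applicability condition $\lambda_2(\nu) > A$ for Temple's inequality, so nothing further is needed.

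The main obstacle — and the only place requiring care — is the computation of $A$ and especially $B$: one must be sure that the Neumann boundary condition for $f = f_{\zeta,\lambda}$ at $t=0$ (namely $f'(0)=0$ from~\eqref{eq:9b}) makes $f$ a legitimate element of the form domain of $\mathfrak{k}_\lambda(\nu)$, and one must correctly identify that $\mathfrak{k}_\lambda(\nu) f - A f$ has no constant component (this is where~\eqref{eq:36} is used — it forces the mean of $(t-\zeta)f^2$ to vanish, so the orthogonal decomposition of $\mathfrak{k}_\lambda(\nu)f$ relative to $f$ is clean and $B$ comes out as a single clean term). Everything else is the bookkeeping of expanding $(t-\nu)^2$ around $\zeta$ and invoking the two virial/energy identities~\eqref{eq:36} and~\eqref{eq:10b}.
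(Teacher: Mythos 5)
Your final argument (after discarding the initial translated-trial-state detour) is exactly the paper's proof: Temple's inequality with $f/\|f\|_2$ as trial state, $A=\lambda+(\nu-\zeta)^2$ via the Feynman--Hellmann identity~\eqref{eq:36}, and $B=4(\nu-\zeta)^2\|(t-\zeta)f\|_2^2/\|f\|_2^2$ from the algebraic identity $\mathfrak{k}_\lambda(\nu)f=\lambda f-2(\nu-\zeta)(t-\zeta)f+(\nu-\zeta)^2 f$. The computations are correct and match the paper step for step.
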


\begin{proof}
The Temple inequality (see~\cite{kato3}) with $f/\|f\|_2$ as trial state, 
implies that if $\lambda_2(\nu)>A$ then
\begin{equation}\label{eq:Temple}
\lambda_1(\nu)\geq A - \frac{B}{\lambda_2(\nu)-A},
\end{equation}
where 
\[
A=\frac{\langle f,{\mathfrak k}_{\lambda}(\nu)f \rangle}{\|f\|_2^2}
 = \lambda + (\nu-\zeta)^2
\]
and 
\[
B = \frac{\langle f, ({\mathfrak k}_{\lambda}(\nu)-A)^2 f\rangle}{\|f\|_2^2}
  = \frac{\langle f, {\mathfrak k}_{\lambda}(\nu)^2 f\rangle}{\|f\|_2^2} - A^2.
\]
Using that ${\mathfrak k}_{\lambda}(\zeta)f=\lambda f$, we find that
\[
{\mathfrak k}_{\lambda}(\nu)f = \lambda f - 2(\nu-\zeta)(t-\zeta)f 
+ (\nu-\zeta)^2 f,
\]
and so
\[
\|{\mathfrak k}_{\lambda}(\nu)f\|^2 = 
\bigl(\lambda+(\nu-\zeta)^2\bigr)^2\|f\|_2^2 + 4(\nu-\zeta)^2\|(t-\zeta)f\|_2^2.
\]
We conclude that
\[
B = 4(\nu-\zeta)^2\frac{\|(t-\zeta)f\|_2^2}{\|f\|_2^2}.
\]
Inserting these expressions for $A$ and $B$ into~\eqref{eq:Temple} 
yields~\eqref{eq:temple}.
\end{proof}

\begin{proof}[Proof of Theorem~\ref{thm:nuzeta}]
We only consider (1), since the second item has already been established.
Combining the lower bounds on $\|f\|_4$ from~\eqref{eq:38} and~\eqref{eq:42} we
first get
\begin{equation}
\begin{aligned}
2\|(t-\zeta)f\|_2^2 &= \lambda\|f\|_2^2-\frac{3\lambda}{4}\|f\|_4^4\\
& \leq \lambda\|f\|_2^2\Bigl(1-\frac{1}{2\zeta^{1/2}}\|f\|_4^2\Bigr)\\
& \leq \lambda\|f\|_2^2
\biggl(1-\frac{\lambda-\Theta_0}
{2\lambda\zeta^{1/2}\|u_1(\,\cdot\,;\xi_0)\|_4^2}\biggr).
\end{aligned}
\end{equation}
We implement this in~\eqref{eq:temple} and use the simple inequality 
$\lambda_2(\nu)\geq\mu_2(\nu)$,
\begin{equation}\label{eq:nuzetabracket}
\lambda_1(\nu)\geq \lambda+(\nu-\zeta)^2
\Biggl[
\frac{\mu_2(\nu)-\bigl(3\lambda-
\frac{\lambda-\Theta_0}{\zeta^{1/2}\|u_1(\,\cdot\,;\xi_0)\|_4^2} \bigr)
-(\nu-\zeta)^2}{\lambda_2(\nu)-\lambda-(\nu-\zeta)^2}
\Biggr].
\end{equation}
By continuity it suffices to check verify that
\begin{align}
\label{eq:Numerator}
\mu_2(\zeta) - \biggl(3\lambda-
\frac{\lambda-\Theta_0}{\zeta^{1/2}\|u_1(\,\cdot\,;\xi_0)\|_4^2} \biggr) > 0,
\end{align}
and
\begin{align}
\lambda_2(\zeta)-\lambda>0.
\end{align}
This last inequality is trivially satisfied since $\lambda_2\geq \mu_2$ which 
satisfies the lower bound~\eqref{eq:mutwoopt}. Thus we only have to 
consider~\eqref{eq:Numerator}. Notice that the parenthesis 
in~\eqref{eq:Numerator} is strictly less than $3$. Since $\mu_2$ is decreasing 
on $\left[0,1\right]$ and $\mu_2(1)=3$ this finishes the proof.
\end{proof}

Define the set ${\mathcal X}(\lambda) \subset {\mathfrak I}
(\lambda)$ as the possible values of $\zeta$, i.e.
\begin{align}
{\mathcal X}(\lambda) := \{ \zeta \in {\mathbb R} \,:\, 
\text{ the function } {\mathbb R}\ni z \mapsto 
{\mathcal F}_{z,\lambda}(f_{z,\lambda}) \text{ has a minimum at } \zeta\}.
\end{align}
By Lemma~\ref{lem:intervals} we have ${\mathcal X}(\lambda)  \subset 
\bigl[\sqrt{\lambda/2}, \sqrt{\lambda}\,\bigr]$, but from 
Figure~\ref{fig:zetabound} it actually follows that 
\begin{equation}\label{eq:bestzeta}
{\mathcal X}(\lambda) \subset \bigl[\xi_0, \sqrt{\lambda}\,\bigr]
\end{equation}

We can summarize the result~\eqref{eq:nuzetabracket} of Temple's inequality 
as follows
\begin{proposition}\label{prop:numden}
Let $\Theta_0 \leq \lambda \leq 1$. Assume that
\begin{equation}\label{eq:nume}
\mu_2(\nu)-\biggl(3\lambda-
\frac{\lambda-\Theta_0}{\zeta^{1/2}\|u_1(\,\cdot\,;\xi_0)\|_4^2} \biggr)
-(\nu-\zeta)^2 \geq 0,
\end{equation}
and
\begin{equation}
  \label{eq:72}
\mu_2(\nu)-\lambda-(\nu-\zeta)^2 > 0
\end{equation}
for all $\zeta\in \mathcal{X}(\lambda)$ and $\nu\in\mathfrak{I}(\lambda)$.
Then $ \lambda_1(\nu) \geq \lambda$ for all $\nu \in {\mathfrak I}
(\lambda)$.
\end{proposition}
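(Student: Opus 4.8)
The plan is to split according to whether $\nu$ lies in $\mathfrak{I}(\lambda)$, and in the nontrivial case to feed the two hypotheses directly into the Temple-type bound~\eqref{eq:temple}. Fix $\lambda\in[\Theta_0,1]$ together with a choice of minimum $\zeta\in\mathcal{X}(\lambda)$, the associated positive minimizer $f=f_{\zeta,\lambda}$, and the corresponding operator $\mathfrak{k}_\lambda(\nu)$; since both assumptions~\eqref{eq:nume} and~\eqref{eq:72} are quantified over all $\zeta\in\mathcal{X}(\lambda)$, it suffices to run the argument for one such choice and then let $\zeta$ range. For $\nu\notin\mathfrak{I}(\lambda)$ the Lemma above (the one stating $\nu\notin\mathfrak{I}(\lambda)\Rightarrow\lambda_1(\nu)\geq\lambda$, which uses only~\eqref{eq:45-1}) already gives $\lambda_1(\nu)\geq\mu_1(\nu)\geq\lambda$, so from now on assume $\nu\in\mathfrak{I}(\lambda)$.

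Next I would check that Temple's inequality is applicable: by~\eqref{eq:45-1} we have $\lambda_2(\nu)\geq\mu_2(\nu)$, so hypothesis~\eqref{eq:72} forces $\lambda_2(\nu)>\lambda+(\nu-\zeta)^2=A$, which is exactly the condition under which~\eqref{eq:temple} was established. Hence
\[
\lambda_1(\nu)\geq \lambda + (\nu-\zeta)^2\biggl[1 - \frac{4\|(t-\zeta)f\|_2^2}{\bigl(\lambda_2(\nu)-\lambda-(\nu-\zeta)^2\bigr)\|f\|_2^2}\biggr],
\]
and it remains only to prove that the bracket is non-negative, i.e.\ that $4\|(t-\zeta)f\|_2^2/\|f\|_2^2\leq\lambda_2(\nu)-\lambda-(\nu-\zeta)^2$.

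For the left-hand side I would reuse verbatim the chain of estimates already performed in the proof of Theorem~\ref{thm:nuzeta}: starting from the virial identity~\eqref{eq:Htwo}, then using~\eqref{eq:38} to compare $\|f\|_2^2$ with $\|f\|_4^2$, and finally the lower bound on $\|f\|_4^2$ from~\eqref{eq:42}, one obtains
\[
\frac{4\|(t-\zeta)f\|_2^2}{\|f\|_2^2} \leq 2\lambda - \frac{\lambda-\Theta_0}{\zeta^{1/2}\|u_1(\,\cdot\,;\xi_0)\|_4^2}.
\]
Combining this with $\lambda_2(\nu)\geq\mu_2(\nu)$, the required inequality $2\lambda-\frac{\lambda-\Theta_0}{\zeta^{1/2}\|u_1(\,\cdot\,;\xi_0)\|_4^2}\leq\lambda_2(\nu)-\lambda-(\nu-\zeta)^2$ becomes, after rearranging, $\mu_2(\nu)-\bigl(3\lambda-\frac{\lambda-\Theta_0}{\zeta^{1/2}\|u_1(\,\cdot\,;\xi_0)\|_4^2}\bigr)-(\nu-\zeta)^2\geq 0$, which is precisely hypothesis~\eqref{eq:nume}. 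Therefore the bracket is $\geq 0$ and $\lambda_1(\nu)\geq\lambda$.

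The whole argument is bookkeeping rather than fresh analysis: all the substance sits in the Temple-type estimate~\eqref{eq:temple} and in the $H$-identities of the virial lemma, both already available. The only delicate points are (i) that $\mathfrak{k}_\lambda(\nu)$ is really a family indexed by the possibly non-singleton set $\mathcal{X}(\lambda)$, so the bound must be obtained uniformly in $\zeta\in\mathcal{X}(\lambda)$ — which is exactly why the hypotheses carry that quantifier — and (ii) verifying the applicability condition $\lambda_2(\nu)>A$ of Temple's inequality throughout $\mathfrak{I}(\lambda)$ and not merely near $\nu=\zeta$; I expect (ii) to be the one point worth a careful line, but it is immediate from~\eqref{eq:72} combined with $\lambda_2\geq\mu_2$, so no genuinely new estimate is needed beyond what Sections~\ref{sec:linear} and~\ref{sec:nonlinear} already provide.
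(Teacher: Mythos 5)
Your proof is correct and follows the same route the paper takes: the paper states this proposition as a direct summary of the Temple-inequality computation that produced~\eqref{eq:nuzetabracket}, and your argument is exactly the unpacking of that display, with~\eqref{eq:72} (plus $\lambda_2\geq\mu_2$) guaranteeing the positive denominator/applicability of Temple, the chain \eqref{eq:Htwo}--\eqref{eq:38}--\eqref{eq:42} bounding $4\|(t-\zeta)f\|_2^2/\|f\|_2^2$, and~\eqref{eq:nume} making the resulting bracket nonnegative. The only superfluous bit is the opening case $\nu\notin\mathfrak{I}(\lambda)$, which is not part of the claimed conclusion.
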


\begin{proof}[Proof of Theorem~\ref{thm:largenu}]
We will use Proposition~\ref{prop:numden}. We start by verifying~\eqref{eq:72}.
To prove (i) we need only to consider $0\leq\nu\leq 1.33$ and to prove (ii) it
suffices to consider $0\leq\nu\leq 1.5$ since the right endpoint of the 
interval $\mathfrak{I}(0.8)$ is less than $1.5$ (solving the equation 
$\mu_1(\nu)=0.8$ gives a numerical value $\nu\approx 1.496$). 
The inequality~\eqref{eq:72} holds for all $0\leq\nu\leq 1.5$, 
$\zeta\in\mathcal{X}(\lambda)$ and $\Theta_0\leq\lambda\leq 1$. Indeed, 
$(\nu-\zeta)^2<1$ by~\eqref{eq:bestzeta} and $\mu_2(\nu)\geq 2.18$ 
by~\eqref{eq:mutwoopt}.

We now consider~\eqref{eq:nume}. If $0\leq\nu\leq\zeta$, 
$\xi_0\leq \zeta\leq1$ and $\Theta_0\leq\lambda\leq 1$ then
\begin{equation}\label{eq:nulessze}
\mu_2(\nu)-3\lambda-(\nu-\zeta)^2 \geq \mu_2(\nu)-3-(\nu-1)^2.
\end{equation}
From Figure~\ref{fig:mu12prime} it is clear that $\mu_2'(\nu)<2(\nu-1)$ on 
$0\leq\nu\leq1$. Hence, the function $\nu\mapsto \mu_2(\nu)-3-(\nu-1)^2$ is
decreasing on this interval. Since $\mu_2(1)=3$ we find that the right-hand side 
of~\eqref{eq:nulessze} is bounded from below by $0$, and it follows 
that~\eqref{eq:nume} holds for $\nu\leq\zeta$.

To complete the proof of (i) it is sufficient to show that the 
inequality~\eqref{eq:nume} holds for $\xi_0\leq\zeta\leq \sqrt{\lambda}$, 
$\zeta<\nu\leq 1.33$ and $\Theta_0\leq\lambda\leq 1$. From 
Figure~\ref{fig:mu1andmu2} we note that $\mu_2$ is decreasing for these values 
of $\nu$, and so since $\nu>\zeta$ it follows that the left-hand side 
in~\eqref{eq:nume} is decreasing as a function of $\nu$. Hence we get a lower
bound replacing $\nu$ by the right endpoint $1.33$. Moreover, 
$1.5\approx 3-1/\bigl(\xi_0^{1/2}\|u(\,\cdot\,;\xi_0)\|_4^2\bigr)>0$ so we also 
get a lower bound if we replace $\lambda$ by $1$, i.e.
\begin{multline}\label{eq:nuzetai}
\mu_2(\nu)-\biggl(3\lambda-
\frac{\lambda-\Theta_0}{\zeta^{1/2}\|u_1(\,\cdot\,;\xi_0)\|_4^2} \biggr)
-(\nu-\zeta)^2\\
\geq \mu_2(1.33)-
\biggl(3- \frac{1-\Theta_0}{\zeta^{1/2}\|u_1(\,\cdot\,;\xi_0)\|_4^2}\biggr)
-(1.33-\zeta)^2.
\end{multline}
Differentiating the right-hand side of~\eqref{eq:nuzetai} with respect to 
$\zeta$ and estimating on $\xi_0\leq\zeta\leq 1$ we find
\begin{gather*}
\frac{d}{d\zeta}\biggl[
\mu_2(1.33)-
\biggl(3- \frac{1-\Theta_0}{\zeta^{1/2}\|u_1(\,\cdot\,;\xi_0)\|_4^2}\biggr)
-(1.33-\zeta)^2\biggr]\\
\begin{aligned}
&=-\frac{1-\Theta_0}{2\zeta^{3/2}\|u_1(\,\cdot\,;\xi_0)\|_4^2}+2(1.33-\zeta)\\
&\geq 
-\frac{1-\Theta_0}{2\xi_0^{3/2}\|u_1(\,\cdot\,;\xi_0)\|_4^2}+2(1.33-1)\\
&\approx 0.26.
\end{aligned}
\end{gather*}
Thus, we get a lower bound of the right-hand side of~\eqref{eq:nuzetai} by
inserting the left endpoint $\zeta=\xi_0$. The lower bound is 
\[
\mu_2(1.33)-
\biggl(3- \frac{1-\Theta_0}{\xi_0^{1/2}\|u_1(\,\cdot\,;\xi_0)\|_4^2}\biggr)
-(1.33-\xi_0)^2\approx 0.01.
\]
This finishes the proof of (i).

We continue with (ii). It is sufficient to show that the 
inequality~\eqref{eq:nume} holds for $\xi_0\leq\zeta\leq \sqrt{\lambda}$, 
$\zeta<\nu\leq 1.5$ and $\Theta_0\leq \lambda\leq 0.8$, where the endpoint $1.5$ 
is chosen to be slightly larger than the right endpoint of the interval 
$\mathfrak{I}(0.8)$.

Again $\mu_2$ is decreasing for these values 
of $\nu$, and so since $\nu>\zeta$ it follows that the left-hand side 
in~\eqref{eq:nume} is decreasing as a function of $\nu$. Hence we get a lower
bound replacing $\nu$ by $1.5$. In the same way as for (i) we also get a lower 
bound if we replace $\lambda$ by the right endpoint $0.8$, i.e.
\begin{multline}\label{eq:nuzetaii}
\mu_2(\nu)-\biggl(3\lambda-
\frac{\lambda-\Theta_0}{\zeta^{1/2}\|u_1(\,\cdot\,;\xi_0)\|_4^2} \biggr)
-(\nu-\zeta)^2\\
\geq \mu_2(1.5)-
\biggl(3\times 0.8 
- \frac{0.8-\Theta_0}{\zeta^{1/2}\|u_1(\,\cdot\,;\xi_0)\|_4^2}\biggr)
-(1.5-\zeta)^2.
\end{multline}
We differentiate the right-hand side of~\eqref{eq:nuzetaii}, and estimate for
$\xi_0\leq\zeta\leq\sqrt{0.8}$, to find
\begin{gather}
\frac{d}{d\zeta}\biggl[
\mu_2(1.5)-
\biggl(3\times 0.8 
- \frac{0.8-\Theta_0}{\zeta^{1/2}\|u_1(\,\cdot\,;\xi_0)\|_4^2}\biggr)
-(1.5-\zeta)^2
\biggr]\\
\begin{aligned}
&= -\frac{0.8-\Theta_0}{2\zeta^{3/2}\|u_1(\,\cdot\,;\xi_0)\|_4^2}+2(1.5-\zeta)\\
&\geq -\frac{0.8-\Theta_0}{2\xi_0^{3/2}\|u_1(\,\cdot\,;\xi_0)\|_4^2}
+2(1.5-\sqrt{0.8})\\
&\approx 1.0.
\end{aligned}
\end{gather}
Hence, we get a lower bound of the right-hand side of~\eqref{eq:nuzetaii} by
inserting the left endpoint $\zeta=\xi_0$. The lower bound we get is
\[
\mu_2(1.5)-
\biggl(3\times 0.8 
- \frac{0.8-\Theta_0}{\xi_0^{1/2}\|u_1(\,\cdot\,;\xi_0)\|_4^2}\biggr)
-(1.5-\xi_0)^2\approx 0.026.
\]
This finishes the proof of (ii).
\end{proof}

%\begin{figure}[H]
%\includegraphics[width=0.65\textwidth]{zetaplot}
%\caption{Graphs of the right-hand sides of~\eqref{eq:nuzetai} 
%(solid, $\xi_0\leq \zeta\leq 1$) and~\eqref{eq:nuzetaii} 
%(dashed, $\xi_0\leq\zeta\leq \sqrt{0.8}$).}
%\label{fig:zetaplot}
%\end{figure}

\appendix

\section{Comments on the numerical calculations}\label{sec:numerical}

We give some details on how the numerical calculations were done. The solutions 
to the eigenvalue equation $\mathfrak{h}(\xi)u=\mu(\xi) u$, not taking the 
Neumann boundary condition into account, are given by
\begin{equation}\label{eq:usol}
u(t)= c_1 e^{-\frac12(t-\xi)^2}H_{\frac12(\mu(\xi)-1)}(t-\xi) + 
c_2 e^{\frac12(t-\xi)^2}H_{-\frac12(\mu(\xi)+1)}(i(t-\xi)).
\end{equation}
Here, $H_\nu(t)$ solves the Hermite equation (see Section~10.13 in~\cite{erd2})
\begin{equation*}
-y''(t)+2ty'(t)-2\nu y(t)=0,
\end{equation*}
and is polynomially bounded at infinity. Hence, for the 
function $u$ in~\eqref{eq:usol} to be square integrable, we must set $c_2=0$. 
Using the well-known relations for the derivative of $H_\nu$, 
$\frac{d}{dt}H_\nu(t)=2\nu H_{\nu-1}(t)$, we find that the Neumann condition 
$u'(0)=0$ reads
\begin{equation}\label{eq:muxi}
(\mu(\xi)-1)H_{\frac12(\mu(\xi)-3)}(-\xi)+\xi H_{\frac12(\mu(\xi)-1)}(-\xi)=0.
\end{equation}
Hence, for $\xi\in\mathbb{R}$, the $j$th eigenvalue $\mu_j(\xi)$ of the operator
$\mathfrak{h}(\xi)$ is given by the $j$th (positive) solution $\mu(\xi)$ 
of~\eqref{eq:muxi}. To obtain an equation for $\mu_j'(\xi)$ we 
differentiate~\eqref{eq:muxi} implicitly.

We use the software Mathematica from Wolfram Research (who claims that 
Mathematica is able to calculate these special functions to any given 
precision\footnote{See \url{http://reference.wolfram.com/mathematica/ref/HermiteH.html}.})
to solve these equations numerically and draw the plots. By 
inserting~\eqref{eq:5} into~\eqref{eq:muxi} we are also able to calculate the
constant $\Theta_0$ to any precision (see also Remark~A.6 in~\cite{fope}).

\section{Additional graphs}\label{sec:plots}

In this appendix we have collected some additional graphs that have to do with 
the eigenvalues $\mu_j(\xi)$ of $\mathfrak{h}(\xi)$.

\begin{figure}[H]
\centering
\includegraphics[width=0.65\textwidth]{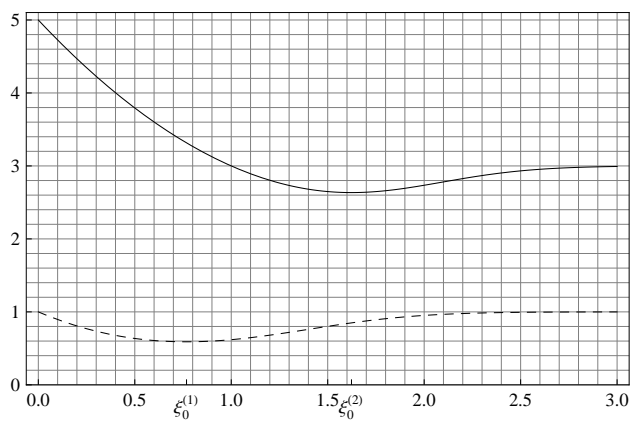}%
\caption{A plot of $\mu_1$ (dashed) and $\mu_2$ (solid).}
\label{fig:mu1andmu2}
\end{figure}

\begin{figure}[H]
\centering
\includegraphics[width=0.65\textwidth]{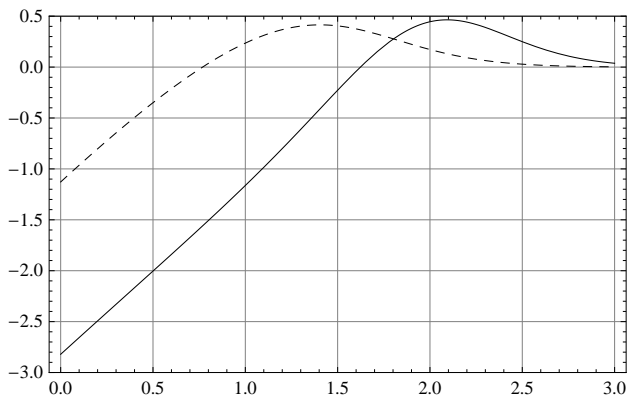}%
\caption{A plot of $\mu_1'$ (dashed) and $\mu_2'$ (solid).}
\label{fig:mu12prime}
\end{figure}

\section*{Acknowledgements}
SF and MP were supported by the Lundbeck Foundation and by the European Research 
Council under the European Community's Seventh Framework Program 
(FP7/2007--2013)/ERC grant agreement 202859.

\nocite{foka,hepe,helf}
%\bibliographystyle{abbrv}
%\bibliography{Spectral}

\end{document}